\documentclass[a4paper]{article}
\usepackage{graphicx}
\usepackage{amsfonts}
\usepackage{amsmath}
\usepackage{amsthm}
\usepackage{url}
\usepackage{subcaption}
\usepackage[left=3cm,top=3.0cm,right=3cm,bottom=3.0cm]{geometry}
\usepackage{authblk}

\usepackage{ngkarris-math}

\usepackage{xcolor}

\newtheorem{theorem}{Theorem}[section]
\newtheorem{remark}{Remark}
\newcommand{\norm}[1]{\left\lVert#1\right\rVert}

\title{Optimal Transport, Timesteppers, Newton-Krylov Methods \\
and Steady States of Collective Particle Dynamics.}

\author[1]{Hannes Vandecasteele\thanks{\texttt{hvandec@jh.edu}}}
\author[2]{Nicholas Karris\thanks{\texttt{nkarris@ucsd.edu}}}
\author[3]{Alexander Cloninger\thanks{\texttt{acloninger@ucsd.edu}}} 
\author[1,4]{Ioannis G. Kevrekidis\thanks{\texttt{yannisk@jhu.edu}}}

\affil[1]{Department of Applied Mathematics and Statistics, Johns Hopkins University}
\affil[2]{Department of Mathematics, University of San Diego, California}
\affil[3]{Department of Mathematics \& Halicioğlu Data Science Institute (HDSI), University of San Diego, California}
\affil[4]{Department of Chemical and Biomolecular Engineering, Johns Hopkins University}

\date{\today}

\begin{document}
\maketitle

\begin{abstract}
Timesteppers constitute a powerful tool in modern computational science and engineering. Although they are typically used to advance the system forward in time, they can also be viewed as nonlinear mappings that implicitly encode steady states and stability information. In this work, we present an extension of the matrix-free framework for calculating, via timesteppers, steady states of deterministic systems  to stochastic particle simulations, where intrinsic randomness prevents direct steady state extraction. By formulating stochastic timesteppers in the language of optimal transport, we reinterpret them as operators acting on probability measures rather than on individual particle trajectories. This perspective enables the construction of smooth cumulative- and inverse-cumulative-distribution-function ((I)CDF) timesteppers that evolve distributions rather than particles. Combined with matrix-free Newton–Krylov solvers, these smooth timesteppers allow efficient computation of steady-state distributions even under high stochastic noise. We perform an error analysis quantifying how noise affects finite-difference Jacobian action approximations, and demonstrate that convergence can be obtained even in high noise regimes. Finally, we introduce higher-dimensional generalizations based on smooth CDF-related representations of particles and validate their performance on a non-trivial two-dimensional distribution. Together, these developments establish a unified variational framework for computing meaningful steady states of both deterministic and stochastic timesteppers.
\end{abstract}

\section{Introduction}
Timesteppers are a powerful tool in modern scientific computation. Given the state $u(t)$ of our system at time $t$, i.e., a function or a collection of particles, and a time horizon $h$, a timestepper $\phi_{h}$ advances the state in time
\begin{equation}
    u(t+h) = \phi_{h}\left(u(t)\right),
\end{equation}
so that iterating the map generates a trajectory $\{u\left(n h\right)\}_{n=0}^\infty$. For stable iteration, the limit of $u\left(nh\right)$ approximates the (stable) steady-state solution of the underlying model. In the context of this manuscript, this model will typically be a differential equation of some kind. Traditionally, timesteppers are used ``naively": we apply $\phi_{h}$ repeatedly to obtain a numerical trajectory that is subsequently studied for qualitative and quantitative convergence features. 

Timesteppers, however, can also be usefully exploited beyond direct simulation. They embody everything about the problem, namely the underlying differential equation, the model parameters, and the boundary conditions. Instead of treating them only as simulators, one may ask what information about the system can be extracted directly from the timestepper mapping itself. Several important system-level tasks can be formulated in this way
\begin{itemize}
\item[1.] Steady States. Fixed points $u^* = \phi_{h}\left(u^*\right)$ of the timestepper correspond to the steady (or invariant) states of the underlying differential equation.
\item[2.] Eigenvalues and Stability. The Jacobian $J = D\phi_{h}(u^*)$ of the timestepper at $u^*$ contains all spectral information from which eigenvalues and stability properties of the underlying model can be inferred. There is a direct relationship between the eigenvalues of the timestepper and the eigenvalues of the right-hand side of the differential equation.
\item[3.] Control and Identification. By systematically tracking the steady-state solutions across varying parameter values, we can trace the corresponding solution branches and identify critical points where the system’s behavior changes drastically, such as folds, bifurcations, or Hopf bifurcations.
\end{itemize}
A convenient way to unify these three tasks is through the residual mapping
\begin{equation} \label{eq:psi}
    \psi(u) = u - \phi_{h}\left(u\right).
\end{equation}
Here, $u \in \mathbb{R}^d$ represents the solution of some differential equation at a given time, typically on a grid, and $\phi_h, \psi: \mathbb{R}^d \to \mathbb{R}^d$ are vector functions. This formulation for calculating steady states allows us to apply the full machinery of iterative nonlinear solvers through matrix-free methods. Particularly in previous work~\cite{fabiani2024task,qiao2006spatially,kelley2004newton}, we used the Newton--Krylov and Arnoldi methods to calculate steady-state profiles and the leading eigenvalues of the Jacobian, respectively. These methods are matrix-free. We do not need explicit Jacobian matrices to compute the steady state or the leading eigenvalues. This last part is essential because codes that implement a timestepper rarely have their Jacobians coded as well. Even if they do so in the form of hand-coded formulas or automatic differentiation, evaluating the (typically large) Jacobian is too time-consuming and uses a lot of memory. Matrix-free methods avoid these bottlenecks by approximating the action of the Jacobian $D \psi(u) v$ in a particular direction $v$ on the fly using finite differences
\begin{equation} \label{eq:JVP}
    D \psi(u) v \approx \frac{\psi(u + \varepsilon v) - \psi(u)}{\varepsilon}.
\end{equation}
Although this approximation induces an error, it can often be ignored in view of the speedups that can be obtained.

In our previous work~\cite{kelley2004newton,qiao2006spatially}, we focused on deterministic timesteppers, developing Newton--Krylov and other matrix-free methods to compute steady states and stability information directly from the residual $\psi(u)$. Most recently we also applied the Newton--Krylov method to neural operators~\cite{fabiani2025enabling}. This line of research established how timesteppers could serve as fixed-point solvers for macroscopic equations, particularly in gradient systems where convergence is theoretically guaranteed. 

In stochastic settings, the naive formulation $X - \phi_{h}(X) = 0$ for a steady state particle ensemble $X$ is ill-defined. Timesteppers are no longer deterministic mappings, but rather involve random evolutions of particle ensembles. Individual particles do not carry steady-state information directly. Indeed, even in steady state, the particles typically fluctuate because of thermal noise. This does not mean that there is no steady state distribution of stochastic particles, but only that the mathematical equality $X = \phi_h(X)$ does not hold. As a consequence, solving $\psi(X) = 0$ for a particle ensemble using Newton-Krylov is useless. In this paper, we re-interpret what a steady-state of particles means on the level of this timestepper residual.

The idea of computing steady-state distributions from particle timesteppers is not new. In~\cite{siettos2012equation}, the authors average many independent particle simulations to obtain smooth, macroscopic quantities. They propose a smooth coarse timestepper in terms of these macroscopic variables and use the Newton–Krylov method to compute (macroscopic) steady states and bifurcation diagrams. A similar idea appears in~\cite{qiao2006spatially}, where an approximate coarse model is introduced to precondition the Newton–Krylov method. The authors also used this preconditioned matrix-free framework to calculate the leading eigenvalues of the Jacobian and to track steady states along the bifurcation diagram. For self-similar systems, coarse representations based on cumulative distribution functions (CDFs) have also been used; see~\cite{chen2004molecular,zou2005equation,zou2006equation}. In these works, the coarse CDF description is repeatedly lifted to fine-scale particle ensembles --- typically via inverse CDF (ICDF)–based sampling --- in order to evolve the system forward in time and ultimately compute steady or self-similar states. Finally, we also mention the work of~\cite{siettos2010system} where kernel-density estimation was applied to stochastic timesteppers in bacterial chemotaxis to calculate the steady-state distribution.

In this manuscript, we go back to the original formulation~\eqref{eq:psi} of steady states to understand what can be recovered when the timestepper is stochastic. As we said earlier, a steady-state ensemble $X$ does not solve $\psi(X) = 0$, but some information must still be contained in this residual. Optimal Transport (OT) provides a principled way to compare ensembles in a Wasserstein space, enabling the construction of well-defined particle-based timesteppers. This extension allows us to treat both deterministic and stochastic dynamics within a unified variational framework. From an optimal transport perspective, we treat the evolving distribution of particles as a time dependent probability measure $\mu_t$ on $\mathbb{R}^d$, rather than trying to follow each particle individually. If $\mu_t$ is ``smooth'' in the sense of having a density and evolving in a regular way, then there is a velocity field $v_t$ such that the continuity equation
\begin{equation} \label{eq:OTcontinuity}
\partial_t \mu_t + \nabla \cdot (v_t \mu_t) = 0
\end{equation}
describes how mass flows. The minimal energy (and smoothest) velocity field $v_t$ can be computed via the optimal transport map $T_{\mu_t}^{\mu_{t+h}}$:
\[v_t = \lim_{h\to 0}\frac{T_{\mu_t}^{\mu_{t+h}} - \id}{h}.\]
We have previously shown that under certain assumptions on the smoothness of $(\mu_t)_{t\ge 0}$, forward Euler methods using $v_t$ are first order accurate in the Wasserstein distance $W_2$ \cite{karris2024using}.

The key advantage over tracking particles individually (which in stochastic or chaotic systems can be very noisy) is that this optimal transport--based method aggregates the motion of \emph{mass} rather than individual sample paths. In many systems, especially those with fast/slow scales, individual particle trajectories will jitter, be non-differentiable, or otherwise provide a noisy or misleading picture at short time scales. By contrast, the distribution $\mu_t$ often changes in a much smoother and more coherent manner. By fitting the velocity field from optimal transport between successive empirical distributions, one effectively filters out the microscopic stochasticity and focuses on how the bulk of the particles move. This produces a cleaner signal, which will allow for a more accurate Newton--Krylov step.
In essence, optimal transport allows us to take a ``Lagrangian'' perspective without needing to actually track each individual particle.
The velocity fields we get from OT are smoother and better describe the overall distribution's behavior, but they still fundamentally capture the dynamics from the perspective of individual agents (as opposed to looking at a particular location, as in the ``Eulerian'' perspective).

These optimal transport algorithms are especially efficient in one dimension. In fact, the optimal transport map of a set of particles can be obtained simply by sorting the particles. 
This is because the optimal transport map from the uniform distribution on \([0,1]\) to any probability distribution \(\mu\) is exactly the inverse of the cumulative density function (ICDF) of \(\mu\) ~\cite[Chapter 2]{santambrogio2015optimal}. 
Moreover, in one dimension, we can compose optimal transport maps to produce new optimal transport maps (i.e., \(T_{\mu_t}^{\mu_{t+2h}} = T_{\mu_{t+h}}^{\mu_{t+2h}} \circ T_{\mu_t}^{\mu_{t+h}}\)), and so computing optimal transport maps is nothing more than computing CDFs and ICDFs.
The advantage of the CDF/ICDF formulation is that inverse CDFs are always smooth (as long as the underlying distribution is absolutely continuous), which allows us to employ advanced optimization techniques for calculating steady states of particle systems. Indeed, a smooth macroscopic ICDF-to-ICDF timestepper can be built directly in three steps: 1) Sample the ICDF by evaluating it in percentiles; 2) Propagate the particles using the stochastic timestepper; 3) Compute the new ICDF from the particle locations. This scheme gives us a bridge between microscopic particle simulations and macroscopic evolution, and is reminiscent of equation-free methods~\cite{kevrekidis2002equation,kevrekidis2004equation,geary2001projective}.

In multiple dimensions, while the optimal transport velocity fields are still the appropriate objects to consider, computing them is much more challenging.
Instead of simply sorting the particles (or computing a CDF), one needs to solve a linear program, which is much more computationally expensive.
Thus, in practice, one must construct an approximation in a systematic way.
For example, while the inverse cumulative density function is not well-defined in higher dimensions, the regular cumulative density function does carry over to multiple dimensions.
This means we can construct a CDF-to-CDF timestepper using the same three steps: 1) Generate samples from the multidimensional CDF; 2) Propagate the particles using the stochastic timestepper; 3) Compute the new CDF from the particles. However, the first step becomes nontrivial in multiple dimensions because simply inverting the CDF at `percentiles' for sampling is impossible. We need to project the multidimensional CDF down to its one-dimensional representations to generate samples. One such technique is to use marginal and conditional CDFs~\cite{zou2005equation,zou2006equation}. First, we construct the marginal CDF in the first dimension and generate $x$-samples from this 1D CDF by inverting it. Then, for every 1D sample, we construct the conditional CDF of the second dimension $y$ and invert it. This procedure is analogous to the OT/ICDF method described above, except it comes from using the Knothe-Rosenblatt coupling instead of the true optimal coupling.
Knothe-Rosenblatt maps can be expressed as a limit of optimal transport maps with appropriate cost functions, and so we can interpret them as approximations of OT maps.
This Knothe--Rosenblatt formulation allows for an efficient extension of an OT-based timestepper in higher dimensions, though at the cost of introducing asymmetry. Other approaches based on sliced optimal transport (i.e., the sliced Wasserstein distance) also offer a flexible alternative, projecting the problem onto many one-dimensional slices before reassembling the results~\cite{bonneel2015sliced,kolouri2019generalized}. Both types of sampling strategies are crucial to making OT-based timesteppers practical for complex and higher-dimensional systems.

Having reinterpreted stochastic timesteppers in optimal transport terms, we can now deploy advanced numerical solvers to compute steady-state distributions. However, there is one computational roadblock. The (I)CDF-to-(I)CDF timestepper is itself stochastic because it is built around a particle timestepper. Averaging over independent runs or variance reduction might reduce this stochastic variability~\cite{qiao2006spatially}, but evaluations of $\psi$ will always be non-deterministic. Indeed, if we evaluated $\psi$ in $u + \varepsilon v$ and $u$ independently, with the respective stochastic error terms $\xi_1$ and $\xi_2$, we effectively calculated $\tilde{\psi}(u) = \psi(u) + \xi_1$ and $\tilde{\psi}(u + \varepsilon v) = \psi(u+ \varepsilon v) + \xi_2$. The Jacobian-vector product in equation~\eqref{eq:JVP} then becomes
\begin{equation} \frac{\tilde{\psi}(u+\varepsilon v) - \tilde{\psi}(u)}{\varepsilon} = \frac{\psi(u+\varepsilon v) + \xi_2 - \psi(u) - \xi_1}{\varepsilon} = \frac{\psi(u+\varepsilon v) - \psi(u)}{\varepsilon} + \frac{\xi_2 - \xi_1}{\varepsilon}.
\end{equation}
At first sight, the error in the finite-difference approximation of the Jacobian looks catastrophic. Indeed, since $\xi_1$ and $\xi_2$ are independent (and we can assume the same error distribution), the variance of $\xi_2-\xi_1$ is double that of $\xi_1$. We further divide it by a (typically) small step size $\varepsilon$! To keep the stochastic error within reasonable limits, we must use a large $\varepsilon$, which in turn induces a larger approximation error in~\eqref{eq:JVP}. In this paper, we will connect the existing error analysis of the (stochastic-free) Newton--Krylov method, and particularly of the Jacobian-vector products, with this stochastic error. We will demonstrate, both theoretically and numerically, that one can still compute steady-state (I)CDFs of stochastic systems accurately, up to a certain noise level or tolerance. We will also demonstrate numerically that the `optimal' finite difference step size $\varepsilon$ increases from $10^{-8}$ to $10^{-2}$ -- $10^{-1}$ in double precision.

\paragraph{Contributions of this work}
This work extends the timestepper-based framework for steady-state computation from deterministic to stochastic particle systems by combining matrix-free Newton–Krylov methods with Optimal Transport formulations. Our main contributions are as follows:
\begin{itemize}
    \item[1.] \textbf{Generalization of timestepper-based fixed-point computation to stochastic systems.} We extend the classical residual formulation $\psi(u) = u - \phi_h(u)$ to stochastic particle timesteppers, where individual particle mappings are ill-defined due to randomness.
    \item[2.] \textbf{Formulation of stochastic timesteppers in the language of Optimal Transport.} We reformulate timesteppers as operations acting on probability measures, and use the Wasserstein distance to define a well-posed residual that is minimized at steady state.
    \item[3.] \textbf{Development of smooth distribution-based timesteppers.} We develop smooth (I)CDF-to-(I)CDF timesteppers that bridge stochastic simulations with distributional evolution, enabling the use of higher-order optimization such as the Newton--Krylov method.
    \item[4.] \textbf{Error analysis of Newton–Krylov methods in the stochastic setting.}  We extend the theoretical error analysis of the Newton--Krylov method in~\cite{brown2008using} to include stochastic timesteppers, providing a quantitative trade-off between directional derivative step size and the number of stochastic particles.
    \item[5.] \textbf{Efficient extensions to higher dimensions.} We propose two practical multidimensional generalizations: a CDF-to-CDF timestepper based on marginal and conditional CDFs, and a sliced Wasserstein formulation relying on (random) one-dimensional projections.
    \item[6.] \textbf{Numerical validation and demonstrations.} We demonstrate the performance of the proposed methods on representative stochastic systems, including a two-dimensional distribution, highlighting accuracy and robustness of Newton--Krylov on smooth representations of particle distributions.
\end{itemize}

\paragraph{Outline of the Manuscript}
We start this paper in Section~\ref{sec:w2_stochastics} by discussing steady-state distributions of particle systems in the language of optimal transport. We particularly focus on the notion of steady state through a reformulation of~\eqref{eq:psi} using the Wasserstein distance. We further relate these as steady-states of the Euler-OT timestepper and discuss its connection to the OT map and the velocity field. From these fundamentals, we build a first-order optimization scheme by minimizing the Wasserstein distance between particles and a time-evolved copy of those particles. We particularly derive an Adam--Wasserstein optimizer and demonstrate, theoretically and numerically on three examples, that this Wasserstein optimization scheme converges quickly and smoothly to the correct steady-state particle distribution. Next, in Section~\ref{sec:nk} we introduce the matrix-free Newton--Krylov method. We perform a general error analysis in the non-stochastic case and demonstrate convergence of Newton--Krylov on the mean-field equation associated with particle methods. We also discuss how the integration time horizon $h$ can be optimally chosen through a spectral analysis of the timestepper.  We also perform an extended error analysis of the Newton--Krylov method in the stochastic case (see Section~\ref{subsec:nk_noisy}) and explain how second-order optimizers applied to this Wasserstein formulation suffer from large statistical errors. This result motivates us to investigate smooth representations of particle distributions, and in particular the ICDF-to-ICDF timestepper in Section~\ref{sec:particles_to_smooth}. We explain the main idea in detail and demonstrate an improved convergence of the Newton--Krylov method using these smooth representations. Finally, in Section~\ref{sec:higher_dimensions}, we discuss two alternative smooth representations for higher-dimensional systems: the CDF-to-CDF timestepper and its equivalent sliced Wasserstein formulation. We then demonstrate their effectiveness on a two-dimensional half-moon probability distribution. We conclude this paper with a summarizing discussion and outlook for further research in Section~\ref{sec:conclusion}.

\section{Stochastic Particle Steady States and Optimal Transport} \label{sec:w2_stochastics}

In previous work, we developed a Newton--Krylov method for computing steady states of deterministic timesteppers.
In the deterministic setting, we wrote the fixed-point condition in residual form
\begin{equation}
\psi(x) = x-\phi_h(x) = 0.
\end{equation}
For particle timesteppers, however, a one-step map is inherently random. That is, $y=\phi_h(x;\xi)$ where $\xi$ is the collection of random numbers used in propagating the particles.
Even when the system is ``in equilibrium'', the individual particles keep moving due to the intrinsic noise $\xi$. An example is the random motion of molecules, even in thermal and statistical equilibrium. More generally, particles move at certain probabilities and `detailed balance' must be maintained. Hence the point-wise condition $\psi(x)=x-\phi_h(x)=0$ is not well defined.

Instead, the steady state must be understood at the level of distributions: a distribution $\mu^\star$ is stationary when the distribution obtained after one step from $\mu^\star$ coincides with $\mu^\star$ again. However, distributions are not readily available from particle codes and must be built using histograms or tools like kernel density estimation. These methods for approximating the underlying distribution are sometimes as noisy as the particles themselves, and they are resource-intense.

To address this problem, we cast this distributional steady state as an optimization problem. We work in Wasserstein geometry and measure the mismatch between the current distribution and its one-step image using the $2$-Wasserstein distance. Writing $X \sim \mu$ and $Y=\phi_h(X,\xi)$, the objective
\begin{equation}
    \mathcal{J}(\mu) = \tfrac12\,W_2^2\!\big(\mu,\ \text{law}(Y)\big)
\end{equation}
is nonnegative and vanishes if and only if $\mu$ is stationary. The Wasserstein distance directly ties into the heart of Optimal Transport theory, and we present some relevant results here.

\subsection{Optimal Transport Background}

Much of the background we present here is discussed in more detail in our previous work \cite{karris2024using}.
We highlight the major results that will be particularly important for us in this paper, but for a more complete presentation of the theory, we refer the reader to \cite{karris2024using}.

Let \(\cP_2(\R^d)\) be the set of Borel probability measures on \(\R^d\) with finite second moment, i.e., the probability measures \(\mu\) such that \(\int_{\R^d}\|x\|_2^2\,d\mu(x) < \infty\).
For \(\sigma, \mu\in\cP_2(\R^d)\), we define \(\Gamma_{\sigma, \mu}\) to be the set of all couplings between \(\sigma\) and \(\mu\):
\begin{equation}
    \Gamma_{\sigma,\mu} \defeq \{\gamma\in \cP_2(\R^d\times\R^d) : \gamma(A\times \R^d) = \sigma(A) \text{ and } \gamma(\R^d\times A) = \mu(A) \text{ for all Borel } A\subseteq \R^d\}. 
\end{equation}
We then define the (2-)Wasserstein distance between \(\sigma\) and \(\mu\) to be
\begin{equation}\label{eq:wass-dist-defn}
    W_2(\sigma,\mu) \defeq \min_{\gamma\in \Gamma_{\sigma,\mu}}\left(\int_{\R^d\times\R^d} \|x-y\|^2\,d\gamma(x,y)\right)^{\frac12}.
\end{equation}
The coupling \(\gamma\) that satisfies \eqref{eq:wass-dist-defn} is called an ``optimal coupling'' or an ``optimal plan''.

It is well known that \(W_2\) defines a metric on \(\cP_2(\R^d)\) (see, e.g., \cite{peyre2019computational}).
It is also well known that the optimal coupling \eqref{eq:wass-dist-defn} can be satisfied by a proper map $x \mapsto y = T(x)$ under hypotheses:
\begin{theorem}[Brenier \cite{brenier1991polar}] \label{thm:brenier}
    Let \(\sigma,\mu\in\cP_2(\R^d)\).
    If \(\sigma\) has a density with respect to the Lebesgue measure, then the optimal coupling \(\gamma\) that satisfies \eqref{eq:wass-dist-defn} is unique, and there exists a \(\sigma\)-a.e.\ unique map \(T:\R^d\to\R^d\) such that \(\gamma = (\id, T)_\#\sigma\).
    That is,
    \begin{equation}\label{eq:brenier-wass-dist}
        W_2(\sigma,\mu) = \left(\int_{\R^d}\|x-T(x)\|_2^2\,d\sigma(x)\right)^{\frac12}.
    \end{equation}
    Moreover, there exists a \(\sigma\)-a.e.\ unique (up to additive constant) convex function \(\phi\) such that \(T = \nabla\phi\).
\end{theorem}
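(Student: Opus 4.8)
The plan is to prove this classical statement (Brenier's theorem) by combining the special structure of the quadratic cost with Kantorovich duality. First I would rewrite the objective in \eqref{eq:wass-dist-defn}: expanding $\|x-y\|_2^2 = \|x\|_2^2 - 2\langle x,y\rangle + \|y\|_2^2$ and using that $\int_{\R^d}\|x\|_2^2\,d\sigma$ and $\int_{\R^d}\|y\|_2^2\,d\mu$ are fixed finite constants (since $\sigma,\mu\in\cP_2(\R^d)$), minimizing $\int\|x-y\|_2^2\,d\gamma$ over $\gamma\in\Gamma_{\sigma,\mu}$ is equivalent to \emph{maximizing} the correlation functional $\gamma\mapsto\int\langle x,y\rangle\,d\gamma$. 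Existence of an optimal plan is the standard Kantorovich compactness argument: $\Gamma_{\sigma,\mu}$ is tight, hence weakly-$*$ sequentially precompact, and the cost functional $\gamma\mapsto\int\|x-y\|_2^2\,d\gamma$ is lower semicontinuous and bounded below.

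Next I would invoke Kantorovich duality for this linear program. The dual is a minimization over potential pairs $(\varphi,\psi)$ satisfying $\varphi(x)+\psi(y)\ge\langle x,y\rangle$, and one shows (via the ``double convexification'' trick) that an optimal pair may be taken in conjugate form, with $\varphi$ a proper lower semicontinuous \emph{convex} function, $\psi=\varphi^*$ its Legendre transform $\varphi^*(y)=\sup_x(\langle x,y\rangle-\varphi(x))$, and $\varphi\in L^1(\sigma)$, $\varphi^*\in L^1(\mu)$. Since there is no duality gap, for an optimal plan $\gamma$ and optimal potentials we have $\int\big(\varphi(x)+\varphi^*(y)\big)\,d\gamma = \int\varphi\,d\sigma+\int\varphi^*\,d\mu = \int\langle x,y\rangle\,d\gamma$. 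The integrand $\varphi(x)+\varphi^*(y)-\langle x,y\rangle$ is pointwise $\ge 0$ by Fenchel--Young, so it vanishes $\gamma$-almost everywhere; equivalently $y\in\partial\varphi(x)$ for $\gamma$-a.e.\ $(x,y)$.

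Now I would use the hypothesis $\sigma\ll\mathcal L^d$. A finite convex function on $\R^d$ is differentiable Lebesgue-a.e., hence $\sigma$-a.e., and at a point of differentiability the subdifferential $\partial\varphi(x)$ is the singleton $\{\nabla\varphi(x)\}$. Combined with the previous step, $y=\nabla\varphi(x)$ for $\gamma$-a.e.\ $(x,y)$, so $\gamma$ is concentrated on the graph of $T:=\nabla\varphi$; since $\gamma$ has first marginal $\sigma$, this forces $\gamma=(\id,T)_\#\sigma$ and gives \eqref{eq:brenier-wass-dist}. Uniqueness of the optimal plan then follows by convexity: if $\gamma_0,\gamma_1$ are both optimal, so is $\tfrac12(\gamma_0+\gamma_1)$, and the argument just given shows that this average is concentrated on a single graph, which is only possible if $\gamma_0=\gamma_1$. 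Uniqueness of $T$ up to $\sigma$-null sets is then immediate, and uniqueness of $\varphi$ up to an additive constant follows because a convex function is determined by its gradient on a set of full $\sigma$-measure (its closure containing the relevant portion of the support of $\sigma$).

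The step I expect to be the main obstacle is the duality bookkeeping: establishing that the primal and dual values coincide (no duality gap) and that an optimal Kantorovich potential can genuinely be chosen convex, lower semicontinuous, and integrable against the respective marginals under only finite-second-moment assumptions — this is where the technical care (via $c$-cyclical monotonicity and the convexification argument) is needed. By contrast, the passage from ``$y\in\partial\varphi(x)$ $\gamma$-a.e.'' to ``$T=\nabla\varphi$ is a well-defined transport map'' is comparatively soft once a.e.\ differentiability of convex functions and $\sigma\ll\mathcal L^d$ are available. All of the auxiliary facts invoked here — Kantorovich duality, existence of optimal plans, a.e.\ differentiability of convex functions — can be cited from \cite{santambrogio2015optimal,peyre2019computational}.
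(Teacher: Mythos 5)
The paper does not prove Theorem~\ref{thm:brenier}; it states it as Brenier's classical result and cites \cite{brenier1991polar} (and, for background, \cite{santambrogio2015optimal}). So there is no internal proof to compare against. Evaluated on its own terms, your outline is the standard Kantorovich-duality proof of Brenier's theorem and is essentially sound: reduce the quadratic cost to maximizing correlation, get existence of an optimal plan by tightness and lower semicontinuity, invoke strong duality with conjugate convex potentials $(\varphi,\varphi^*)$, deduce that any optimal plan is supported where the Fenchel--Young inequality saturates, i.e.\ on $\{(x,y):y\in\partial\varphi(x)\}$, and use $\sigma\ll\mathcal L^d$ to collapse the subdifferential to a gradient. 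The uniqueness-by-midpoint argument is also the standard one.

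One step is stated slightly too loosely. You write ``a finite convex function on $\R^d$ is differentiable Lebesgue-a.e., hence $\sigma$-a.e.,'' but the Kantorovich potential $\varphi$ produced by duality is a priori a proper lower semicontinuous convex function taking values in $(-\infty,+\infty]$, not a finite function on all of $\R^d$. Almost-everywhere differentiability holds only in the \emph{interior} of $\operatorname{dom}\varphi$. To close the gap you need the short argument that $\sigma$ is concentrated on $\operatorname{int}\operatorname{dom}\varphi$: integrability $\varphi\in L^1(\sigma)$ gives $\sigma(\operatorname{dom}\varphi)=1$, the boundary of the convex set $\operatorname{dom}\varphi$ is Lebesgue-null, and $\sigma\ll\mathcal L^d$ then gives $\sigma(\operatorname{int}\operatorname{dom}\varphi)=1$. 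With that inserted, the passage from ``$y\in\partial\varphi(x)$ $\gamma$-a.e.'' to ``$T=\nabla\varphi$ is a well-defined $\sigma$-a.e.\ map'' is correct. A similarly mild caveat applies to your final sentence: a convex function is determined by its gradient up to a constant on each connected component of the interior of its domain, so ``unique up to additive constant'' is to be read per component (as is standard when citing Brenier's theorem). Neither of these affects the overall strategy, which is the same one found in \cite{santambrogio2015optimal,peyre2019computational}.
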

We call the map \(T\) given in Theorem \ref{thm:brenier} the ``optimal transport map'' from \(\sigma\) to \(\mu\), and we denote it \(T_\sigma^\mu\).

While optimal transport on discrete measures does not generally admit optimal \emph{maps} as in Brenier's Theorem, there is an important case where we do get optimal maps:

\begin{theorem} [Proposition 2.1 in \cite{peyre2019computational}]
    If \(\sigma = \frac1N\sum_{i=1}^N \delta_{x_i}\) and \(\mu = \frac1N\sum_{i=1}^N \delta_{y_i}\) are uniform measures on the same number of distinct points, then there exists an optimal transport map \(T_\sigma^\mu\) in the sense that
    \[
        W_2(\sigma,\mu) = \left(\frac1N\sum_{i=1}^N\|x_i - T_\sigma^\mu(x_i)\|^2\right)^{\frac12}
    \]
    is minimal. Moreover, if \(T_\sigma^\mu\) is such a map, then there exists a permutation \(\tau\in S_N\) (the symmetric group on \(N\) elements) such that \(T_\sigma^\mu(x_i) = y_{\tau(i)}\) for all \(i = 1, \dots, N\).
\end{theorem}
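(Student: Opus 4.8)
The plan is to reduce the Kantorovich problem~\eqref{eq:wass-dist-defn} to a finite-dimensional linear program over the Birkhoff polytope and then invoke the Birkhoff--von Neumann theorem to conclude that an optimal coupling can be realized by a permutation. First I would parametrize couplings: since $\sigma$ and $\mu$ are supported on the finite sets $\{x_1,\dots,x_N\}$ and $\{y_1,\dots,y_N\}$, any $\gamma\in\Gamma_{\sigma,\mu}$ is determined by the weights $P_{ij}=\gamma(\{(x_i,y_j)\})\ge 0$, and the marginal constraints defining $\Gamma_{\sigma,\mu}$ become $\sum_j P_{ij}=\tfrac1N$ for every $i$ and $\sum_i P_{ij}=\tfrac1N$ for every $j$; equivalently, $Q\defeq NP$ is doubly stochastic. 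The transport cost is then the linear functional $\sum_{i,j}P_{ij}\,\|x_i-y_j\|^2=\tfrac1N\sum_{i,j}Q_{ij}C_{ij}$ with $C_{ij}=\|x_i-y_j\|^2$, so $W_2^2(\sigma,\mu)$ is the minimum of a linear function over the Birkhoff polytope $\mathcal{B}_N$ of doubly stochastic matrices --- a nonempty (the product coupling gives $Q_{ij}=1/N$), compact, convex polytope.

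Next, a linear functional on a compact polytope attains its minimum at a vertex, and by the Birkhoff--von Neumann theorem the vertices of $\mathcal{B}_N$ are exactly the permutation matrices. Hence there is an optimal $Q^\star=P_\tau$ for some $\tau\in S_N$, i.e.\ $P^\star$ has entries $1/N$ in the positions $(i,\tau(i))$ and $0$ elsewhere. This optimal coupling equals $(\id,T)_\#\sigma$ for the map $T$ defined on the support by $T(x_i)=y_{\tau(i)}$, and its cost is $\tfrac1N\sum_i\|x_i-y_{\tau(i)}\|^2$. Since every transport map induces a coupling in $\Gamma_{\sigma,\mu}$, no map can attain a cost below the Kantorovich value, so this $T$ is optimal and $W_2(\sigma,\mu)=\big(\tfrac1N\sum_i\|x_i-T(x_i)\|^2\big)^{1/2}$, which proves the first assertion. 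If one prefers to avoid quoting Birkhoff--von Neumann, the same conclusion follows by cycle cancellation: from any optimal $P^\star$ whose support is not a permutation, one finds an alternating cycle in the bipartite support graph and pushes mass around it in the cost-non-increasing direction until a new zero appears; each step strictly reduces the number of nonzero entries, so after finitely many steps one obtains a permutation coupling of no larger cost.

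For the ``moreover'' claim, let $T$ be any map with $(\id,T)_\#\sigma\in\Gamma_{\sigma,\mu}$ --- in particular, any optimal transport map. The first-marginal condition is automatic, and the second forces $T_\#\sigma=\mu$, i.e.\ $\tfrac1N\sum_i\delta_{T(x_i)}=\tfrac1N\sum_j\delta_{y_j}$. Because the points $y_1,\dots,y_N$ are distinct, matching atoms and their masses shows that each $T(x_i)$ must be one of the $y_j$ and that each $y_j$ is attained exactly once; equivalently, letting $\tau(i)$ be the index with $T(x_i)=y_{\tau(i)}$ defines an element $\tau\in S_N$, so $T(x_i)=y_{\tau(i)}$ for all $i$.

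I expect the only genuinely non-routine ingredient to be the Birkhoff--von Neumann step (equivalently, the cycle-cancellation argument), which is precisely what makes the Monge and Kantorovich values coincide in this discrete, equal-mass setting; the remaining steps are bookkeeping with finite sums and marginal constraints.
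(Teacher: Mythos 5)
Your proof is correct. The paper does not prove this result itself---it is stated and cited directly as Proposition 2.1 of the reference on computational optimal transport---so there is no distinct argument in the manuscript to compare against; your linear-programming reduction to the Birkhoff polytope combined with the Birkhoff--von Neumann theorem (or, as you note, cycle cancellation in the bipartite support graph) is precisely the standard argument behind that cited proposition, and your direct atom-matching argument correctly handles the ``moreover'' clause.
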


\subsection{Evolving Measures as Curves in Wasserstein Space}
Now we consider the setting where we have a probability measure evolving over time.
Explicitly, consider a curve \(\mu:[0,T]\to \cP_2(\R^d)\) written as \(t\mapsto \mu_t\).
If \(\mu_t\) is absolutely continuous (as a curve) with respect to the Wasserstein distance, then there exists a unique ``minimal energy'' velocity field \(\bv_t\) that satisfies the continuity equation
\[
    \del_t\mu_t + \nabla\cdot(\bv_t\mu_t) = 0,
\]
and if \(\mu_t\) has a density with respect to the Lebesgue measure for all times \(t\in[0,T]\), then this velocity field is computable as a derivative of optimal transport maps:
\begin{equation}\label{eq:velocity-field-limit-computation}
    \bv_t = \lim_{h\to 0} \frac{T_{\mu_t}^{\mu_{t+h}}-\id}{h} \qquad \text{ in } L^2(\mu_t).
\end{equation}
Furthermore, the Benamou-Brenier formulation of the Wasserstein distance \cite{benamou2000computational} gives
\[
    W_2(\mu_t,\mu_{t+h}) = \left(\int_t^{t+h} \|\bv_s\|_{L^2(\mu_s)}^2\,ds\right)^{\frac12}.
\]
Notice that the above formulation says that an evolving distribution reaches steady state exactly when \(\bv_t = 0\).
Moreover, when \(\bv_s \approx 0\) (meaning \(\|\bv_s\|_{L^2(\mu_s)}\approx 0\)) for all \(s\in[t,t+h]\), then \(W_2(\mu_t,\mu_{t+h}) \approx 0\), and so the distribution has ``approximately'' reached steady state.

In our setting, however, we do not have access to \(\mu_t\) for a continuous time interval.
Instead, we have a timestepper \(\phi_h\) for some small step \(h\), and so we only have \(\mu_t\) for discrete times \(t_n = hn\).
In this situation, we assume that the timestepper approximates the evolution of some continuous curve, and that we can approximate the velocity field \(\bv_t\) associated with this continuous curve using the finite-difference approximation suggested by \eqref{eq:velocity-field-limit-computation}:
\[
    \bv_t \approx \bv_t^h \defeq \frac{T_{\mu_t}^{\mu_{t+h}}-\id}{h}.
\]
Note that by \eqref{eq:brenier-wass-dist}, we have \(W_2(\mu_t,\mu_{t+h}) = h\|\bv_t^h\|_{L^2(\mu_t)}\), and so computing the Wasserstein distance between successive time steps is equivalent to computing the norm of the approximate velocity field.
For more details about the link between evolving measures, velocity flow fields, and finite difference approximations, we refer to our previous work in \cite{karris2024using}.

\subsection{Wasserstein Formulation of Particle Steady States} \label{subsec:wasserstein_optimization}


In practice, our timestepper acts on particles, not on continuous distributions. We can still use the ideas discussed above as a heuristic to suggest that the system's evolution is in (approximate) steady state when the Wasserstein distance between successive time steps is minimized.

That is, if we consider the initial locations of the particles \(X = (x_1, \dots, x_N)\) and their locations \(Y = \phi_h(X) = (y_1, \dots, y_N)\) - after applying the timestepper over a time-horizon of size $h$ - as discrete probability measures \(\mu = \frac1N\sum_{i=1}^N \delta_{x_i}\) and \(\nu = \frac1N\sum_{i=1}^N \delta_{y_i}\) respectively, then we can consider the objective function
\begin{equation} \label{eq:discrete_W2}
    F(X;\xi) \defeq \frac{N}{2} W_2^2(\mu,\nu) = \frac{1}{2}\sum_{i=1}^N\|x_i - T_\mu^\nu(x_i)\|^2 = \frac{1}{2}\sum_{i=1}^N\|x_i - y_{\sigma^\star(i)}\|^2,
\end{equation}
where \(\sigma^\star\) is the optimal coupling - a permutation. Finding an approximate steady state is equivalent to (approximately) finding a minimizer of \(F\) over all initial particle locations \(X\).
Between changes of the optimal transport coupling, $F$ is a smooth quadratic in the particle locations, and its (sub)gradient with respect to each $x_i$ is the transport displacement from $x_i$ to its match under the current optimal plan. This naturally leads us to first-order optimization methods to minimize~\eqref{eq:discrete_W2} such as (stochastic) gradient descent or Adam~\cite{kingma2014adam}. 

Let \(\Pi^\star\) denote the permutation matrix that corresponds to the optimal coupling, i.e., \(\Pi^\star Y = (y_{\sigma^\star(1)}, \dots, y_{\sigma^\star(N)})\).
Now \(F\) reads
\[
    F(X;\xi) = \frac12\|X - \Pi^\star Y\|^2
\]
and so the gradient of $F$ with respect to $X$ is
\begin{equation} \label{eq:w2_gradient}
    \nabla F(X) = X - \Pi^\star Y + D\phi_h(X; \xi)^T\left(\phi_h(X,\xi) - (\Pi^\star)^{\top} X\right).
\end{equation}
In principle, we can evaluate this gradient and plug it into any first-order optimization algorithm, such as the Adam optimizer.

From a numerical point of view, the second term of~\eqref{eq:w2_gradient} poses a problem. Two subsequent evaluations of $F(X)$ for the same particles $X$ will differ due to the influence of $\xi$. In particular, this means that the Jacobian of the timestepper $D\phi_h(X; \xi)$ will be very noisy, even if we can evaluate it exactly by automatic differentiation. More importantly, this Jacobian will contain little actual information about the drift of the particles. Fortunately, by the Envelope theorem~\cite{ambrosio2005gradient,peyre2019computational}, we can use only the first term of the gradient
\begin{equation}
\partial_X \tfrac{1}{2}W_2^2(X,Y) = X - \Pi^\star Y
\end{equation}
for optimization purposes.

\begin{remark}
    This formulation gives a slightly different perspective on the Euler-type method we develop in detail in our previous work \cite{karris2024using}. 
    Note that, in the language of velocity fields from earlier, we have
    \[
        \bv_t^h(x_i) = \frac{T_\mu^\nu(x_i) - x_i}{h} = \frac{1}{h}(y_{\sigma^\star(i)}-x_i),
    \]
    and so this partial gradient of our objective function is nothing more than (a scaled copy of) the negative of our approximate velocity field \(\bv_t^h\).
    This means that our Euler-type method of taking steps in the direction described by the velocity field \(\bv_t^h\) can actually be interpreted as running gradient descent with the partial gradient \(X-\Pi^\star Y\).
    The insight that the present functional-gradient formulation provides, is that it is now clear how to apply more sophisticated methods for minimizing the objective function (i.e., finding steady states), such as Adam, which we can interpret as a natural extension of our previous work.
\end{remark}

There remains one more computational bottleneck in this Wasserstein-minimization algorithm, and that is the computation of the optimal transport map $\Pi^\star$. In one dimension, sorting both clouds and matching in order yields $\Pi^\star$ in $\mathcal{O}(N\log N)$ time. This is the theoretically optimal complexity. In multiple dimensions, one may use a linear assignment solver which is typically very expensive, i.e., $\mathcal{O}(N^3)$~\cite{burkard2012assignment,peyre2019computational}. We will showcase the Wasserstein-minimization approach on both one- and two-dimensional examples in the next section.

For any initial set of particles $X$, we can evaluate $F(X)$ and its gradient $\partial_X F(X)$. We can therefore minimize $F$ with any first-order scheme; we use Adam because of its stability under mild noise. A typical iteration is as follows
\begin{enumerate}
\item Given $X_k$ and random numbers $\xi_k$, compute $Y_k=\phi_h(X_k;\xi_k)$.
\item Compute an optimal plan $\Pi_k^\star=\Pi^\star(X_k,Y_k)$.
\item Form the gradient $g_k=X_k-\Pi_k^\star Y_k$ (optionally average over a small batch of independent runs for variance reduction).
\item Apply Adam to obtain $X_{k+1}$.
\end{enumerate}
We use PyTorch's Adam optimizer in our implementation. One advantage of PyTorch is that we can use its call graph to directly compute gradients of $F(X)$. The first term of that gradient can then be obtained by detaching $Y$ from the call graph.

In summary, minimizing $F(X)=\tfrac12 W_2^2(X,\phi_h(X;\xi))$ with Adam yields a fast $\mathcal{O}(N \log N)$ procedure that is robust to stochasticity and faithful to the Wasserstein fixed-point formulation, while avoiding the instability of second-order information at coupling changes.

\subsection{Three Examples} \label{subsec:wasserstein_results}
The Wasserstein particle flow is a well-defined, easy-to-implement, and reliable approach to compute steady-state distributions of particle methods. Combining gradient flow with the Adam optimizer also has significant advantages such as steady decrease of the $W_2^2$-loss. In this section, we demonstrate the combined method on three examples: bacterial chemotaxis (section~\ref{subsubsec:w2_chemotaxis}), the nonlinear economic agents model (section~\ref{subsubsec:w2_agents}), and a two-dimensional overdamped Langevin dynamics with non-trivial potential energy profile (section~\ref{subsubsec:w2_halfmoon}).

\subsubsection{Bacterial Chemotaxis} \label{subsubsec:w2_chemotaxis}
Consider a collection of bacteria $X_t$ that eat food from a substrate $S(x)$ on the domain $[-L,L]$. The function $S(x)$ represents the amount of food at position $x$, and we assume that $S$ remains unchanged over time. Furthermore, there is a chemotactic sensitivity $\chi(S)$ that indicates the level of activation of the bacteria, solely a function of the local food supply. Finally, we also incorporate a diffusion constant $D$ to model the random motion. The bacterial positions $X_t$ over time are then governed by the overdamped Langevin equation~\cite{fournier2017stochastic}
\begin{equation} \label{eq:chemotaxis}
 dX_t = \chi(S(X_t)) S_x(X_t)dt + \sqrt{2D} dW_t,
\end{equation}
where $S_x$ is the food gradient and $W_t$ is the standard Brownian motion. The bacteria also cannot leave $[-L,L]$ so we impose reflective or no-flux (Neumann) boundary conditions.

We discretize the overdamped Langevin dynamics in equation~\eqref{eq:chemotaxis} using an Euler-Maruyama method with step size $10^{-3}$, and per evaluation of the timestepper $\phi_h(\cdot)$ we integrate up to $h = 1$ seconds. Reflective boundary conditions are applied after every Euler-Maruyama step. We run the Adam optimizer with standard parameters ($\beta_1 = 0.9, \beta_2=0.99$) and an initial `learning rate' of $10^{-1}$. We decrease the learning rate by a factor $10$ every $100$ epochs, for a total of $300$ epochs. The initial distribution is a truncated Gaussian with a mean $5$ and a standard deviation of $2$.

As we see in Figure~\ref{fig:w2_chemotaxis}, the Wasserstein-Adam optimizer reliably converges to a final loss of $2 \ 10^{-5}$. The loss gradient also decreased to a similar value, indicating strong convergence. The histogram of the optimized particles (right figure, orange) also matches well with the analytic invariant distribution. Finally, we see in the figure on the left that the Wasserstein-Adam optimizer reached this steady-state already after about $150$ epochs - or a total simulation time of $150$ seconds. This is because the objective function is evaluated only once per Adam epoch. Compared to computing the steady-state by long-time evolution of~\eqref{eq:chemotaxis}, which takes about $500$ seconds of in-simulation time, the Wasserstein-Adam method is much faster.

\begin{figure}[h]
    \centering
    \begin{subfigure}[b]{0.49\textwidth}
        \centering
        \includegraphics[width=\textwidth]{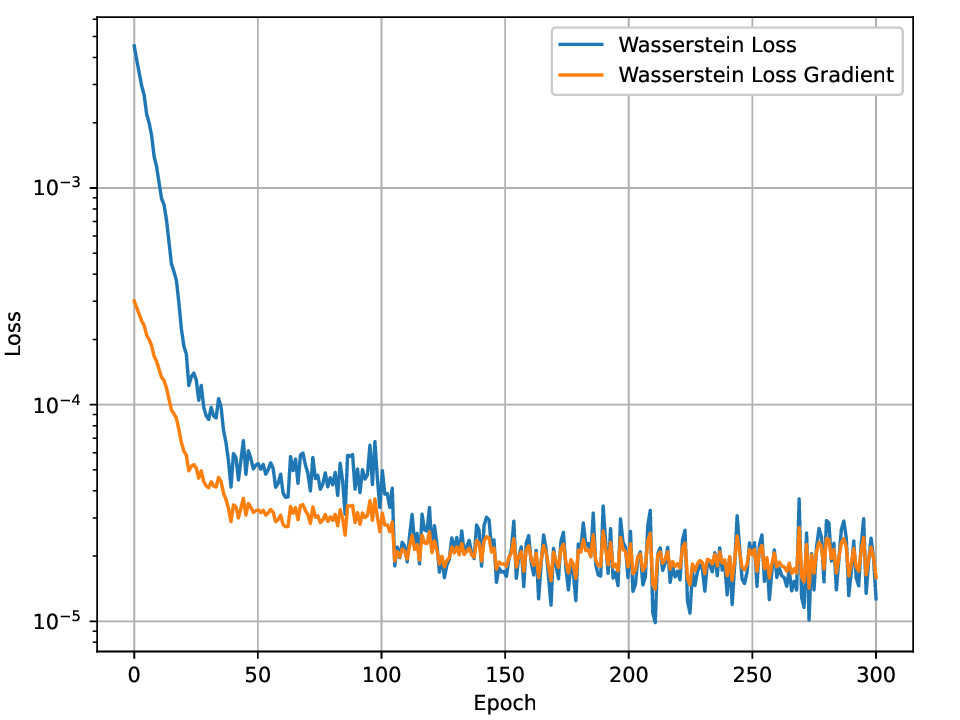}
    \end{subfigure}%
    \begin{subfigure}[b]{0.49\textwidth}
        \centering
        \includegraphics[width=\textwidth]{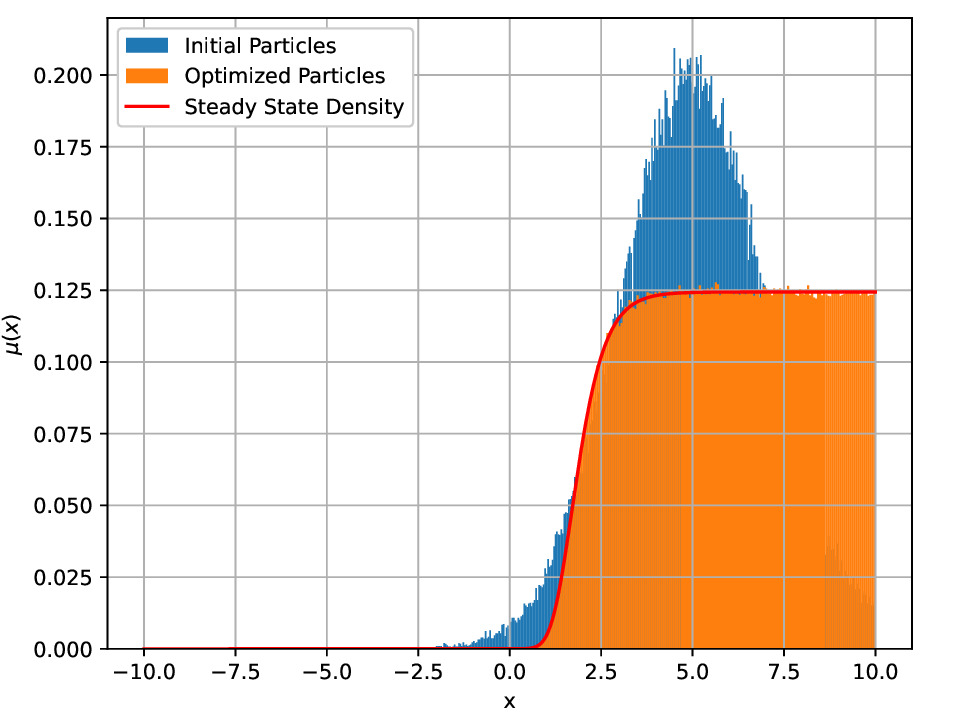}
    \end{subfigure}
    \caption{Numerical results for the Wasserstein-Adam optimizer on the chemotaxis model. Left: Wasserstein loss (blue) and gradient-norm (orange) per epoch. Right: Histogram of the initial particles in blue, the Wasserstein-Adam optimized particles in orange, and the analytic steady-state density (see equation~\eqref{eq:chemotaxis_ss}) in red.}
    \label{fig:w2_chemotaxis}
\end{figure}

\subsubsection{Economic Agents} \label{subsubsec:w2_agents}
The Wasserstein-Adam method also works well for nonlinear models. Let us consider an example of economic agents trading stocks. Each agent is represented by a value $X_i(t) \in (-1,1)$ that gives the tendency to buy ($X_i =1$) or sell ($X_i = -1$) a certain stock. More details on the model can be found in~\cite{fabiani2024task}. We use the timestepper available at~\cite{evangelou_agent_based} to integrate the agents up to $h = 1$ seconds at a time. This timestepper is unbiased for the dynamics of the agents' dynamics and uses the discretization presented in~\cite{fabiani2024task}. We again use an Adam optimizer with an initial learning rate of $10^{-1}$ and with standard parameters to find the steady-state distribution of the particles. The learning rate is decreased by a factor $10$ every $100$ epochs for a total of $300$ epochs. The initial distribution is a Gaussian with zero mean and a standard deviation of $0.1$. The steady-state distribution is wider and approximately normal.

Figure~\ref{fig:w2_agents} displays the Wasserstein loss and gradient on the left, as well as optimized agents compared to the steady-state distribution on the right. The initial distribution of agents is shown too. We observe an excellent match between the histogram of optimized agents and the invariant distribution. The loss stagnates near $2 \ 10^{-6}$, nearly four orders of magnitude lower than the initial loss, demonstrating a clear convergence.

\begin{figure}[h]
    \centering
    \begin{subfigure}[b]{0.49\textwidth}
        \centering
        \includegraphics[width=\textwidth]{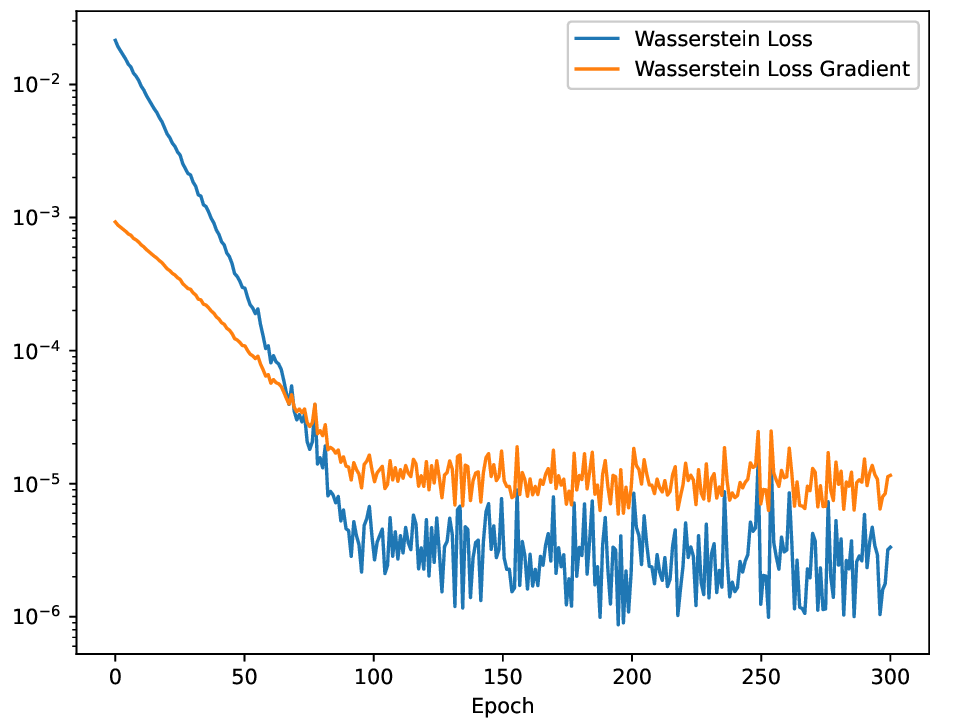}
    \end{subfigure}%
    \begin{subfigure}[b]{0.49\textwidth}
        \centering
        \includegraphics[width=\textwidth]{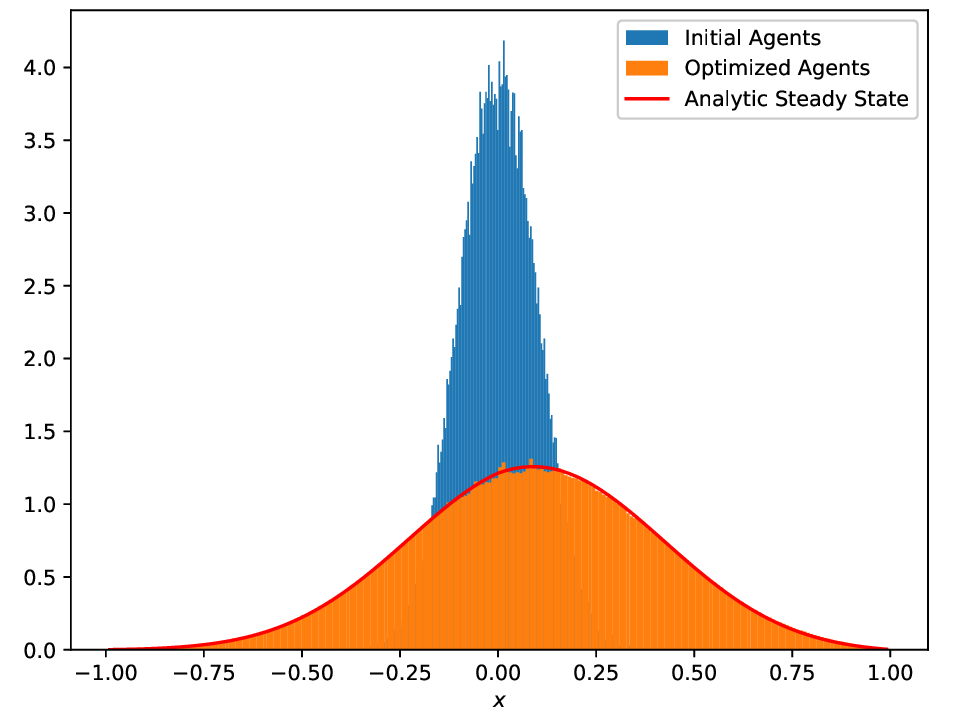}
    \end{subfigure}
    \caption{Numerical results for the Wasserstein-Adam optimizer on the economic agents model. Left: Wasserstein loss (blue) and gradient-norm (orange) per epoch. Right: Histogram of the initial particles in blue, the Wasserstein-Adam optimized particles in orange, and the analytic steady-state density (see Figure~\ref{fig:agent_pde_ss}) in red.}
    \label{fig:w2_agents}
\end{figure}

\subsubsection{The Half-Moon Potential} \label{subsubsec:w2_halfmoon}
In multiple dimensions, one cannot exploit the monotone rearrangement used in 1D (sorting), so computing $W_2$ requires solving a discrete assignment problem between the two point clouds. We use the linear assignment solver, whose worst-case complexity is $\mathcal{O}(M^3)$ for $M$ paired points; applied to the full cloud, this would be $\mathcal{O}(N^3)$. To keep costs manageable, we adopt mini-batches of size $B$: 
each step propagates $B$ particles through the timestepper and solves an $\mathcal{O}(B^3)$ assignment problem. These batches are drawn uniformly from the original $N$ particles, and 
We process $N/B$ batches per iteration, yielding a total $\mathcal{O}(NB^2)$ complexity. The smaller the batch size, the smaller the computational cost. 
The rest of the Wasserstein–Adam flow is unchanged from 1D: we form $Y_b=\varphi_h(X_b)$, compute an optimal match $\Pi_b^\star$ on each batch $b$, use the detached gradient $\partial_X \tfrac{1}{2} W_2^2(X_b, Y_b) = X_b-\Pi_b^\star Y_b$, and update the particles $X_b$ with Adam.

We note that the effective result of this is equivalent to solving a \emph{constrained} optimal assignment problem, where we require the \(N\times N\) permutation matrix \(\Pi\) to be block-diagonal (i.e., the $N/B$ diagonal $B\times B$ blocks are the permutation matrices \(\Pi^*_b\)).
This permutation matrix is in general not the exact optimal coupling for the overall \(N\times N\) assignment problem, but if each batch is chosen randomly, the batched coupling will be a close approximation of the optimal coupling --- in particular, it still gives a good descent direction for use in Adam.

For this example, we consider a two-dimensional half-moon potential. The potential energy function $U(x,y)$ is given by
\begin{equation} \label{eq:halfmoonpotential}
    U(x, y) = A \left(r(x,y) - R\right)^2 + B \exp\left( - \alpha (y - y_s)\right)
\end{equation}
with $A = 2, B= 0.5, R = 2, \alpha=1.5$, and $y_s=-0.5$. The function $r(x,y) = \sqrt{x^2+y^2}$ measures the distance from the origin. A two-dimensional color plot of the corresponding steady-state distribution
\begin{equation} \label{eq:halfmoon_dist}
    \mu(x,y) = Z^{-1} \exp\left(-U(x,y)\right)
\end{equation}
is shown in Figure~\ref{fig:halfmoonpotential}.

\begin{figure}
    \centering
    \includegraphics[width=0.8\linewidth]{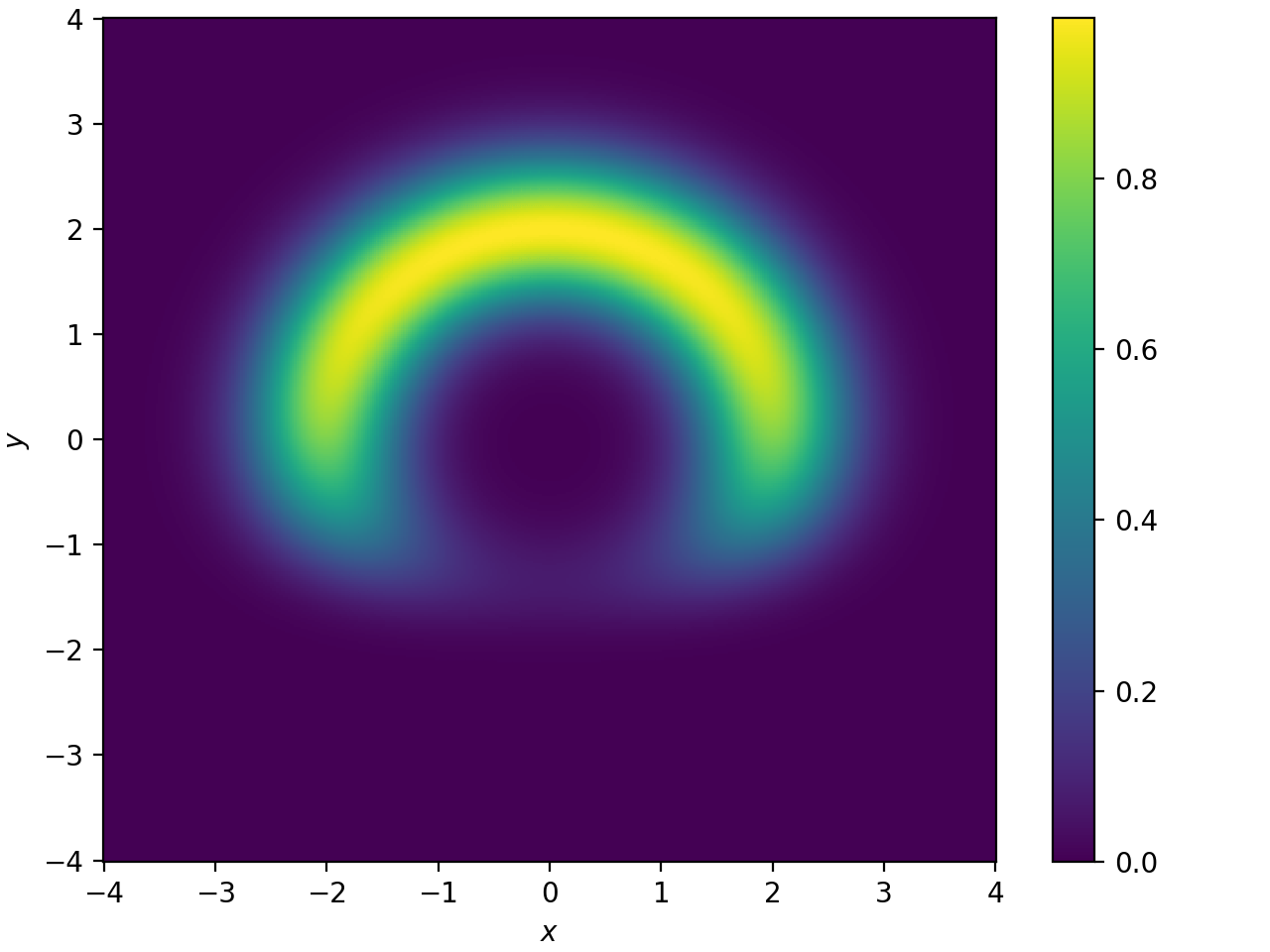}
    \caption{A color plot of the half-moon potential, see equation~\eqref{eq:halfmoonpotential} for more details.}
    \label{fig:halfmoonpotential}
\end{figure}

To compute the steady-state distribution using the Wasserstein-Adam method, we construct an overdamped Langevin dynamics of the form
\begin{equation} \label{eq:langevin_halfmoon}
    d(X,Y) = -\nabla U(X,Y) dt + \sqrt{2} dW_t
\end{equation}
where $W_t$ is a two-dimensional Brownian motion with independent components. The steady-state distribution of this dynamics is precisely~\eqref{eq:halfmoon_dist}. We also implement reflective boundary conditions (e.g., Neumann) to keep all particles within the domain $[-4,4] \times [-4,4]$. We discretize this overdamped Langevin dynamics using the Euler-Maruyama timestepper with step size $10^{-3}$, and integrate up to time $h = 0.1$ seconds. The initial distribution is a standard bivariate Gaussian, which has minimal overlap with~\eqref{eq:halfmoon_dist}.

To compute the Wasserstein loss, we divide the $N=10^5$ particles into batches of size $1000$. For each batch, we propagate the particles through the timestepper and evaluate the loss using the OT coupling algorithm. We feed these batches to the Adam optimizer, which uses an initial learning rate of $10^{-2}$, decreasing by a factor of $10$ every $100$ epochs, up to $300$ epochs. 

The optimization results are shown in Figure~\ref{fig:w2_halfmoon}. The left panel reports the objective and gradient-norm trajectories: the initial $\tfrac12 W_2^2$ of $0.51233$ decreases monotonically to $1.54\times10^{-2}$, effectively reaching the stochastic noise floor, and the loss gradient decays in tandem, indicating stable convergence. The right panel displays a 2D histogram (color map) of the optimized particles, which shows an excellent match to the analytic steady-state distribution from figure~\ref{fig:halfmoonpotential} with no visible systematic bias.

\begin{figure}[h]
    \centering
    \begin{subfigure}[b]{0.49\textwidth}
        \centering
        \includegraphics[width=\textwidth]{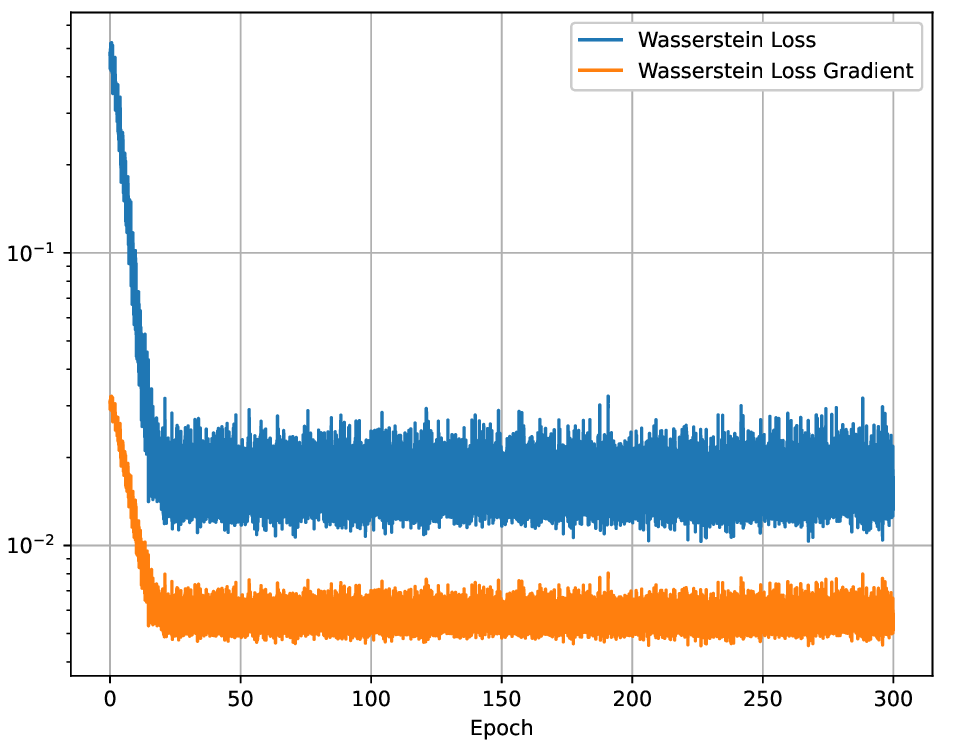}
    \end{subfigure}%
    \begin{subfigure}[b]{0.49\textwidth}
        \centering
        \includegraphics[width=\textwidth]{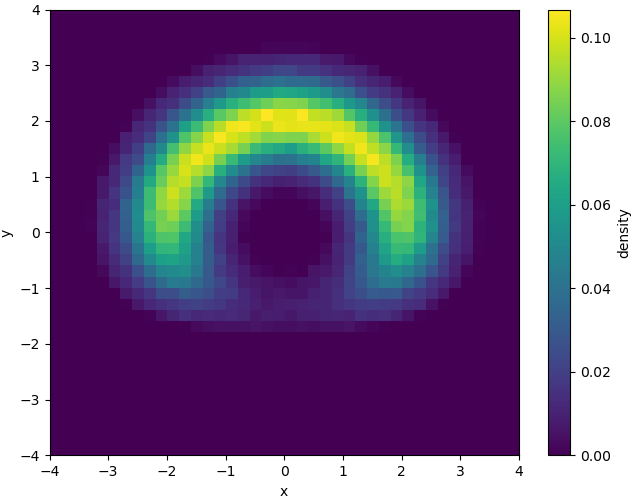}
    \end{subfigure}
    \caption{Numerical results for the Wasserstein-Adam optimizer on the half-moon potential. Left: Wasserstein loss (blue) and gradient-norm (orange) per epoch. Right: Colormap of the 2D histogram of the optimized particles in orange.}
    \label{fig:w2_halfmoon}
\end{figure}

\begin{figure}[h]
    \centering
    \begin{subfigure}[b]{0.49\textwidth}
        \centering
        \includegraphics[width=\textwidth]{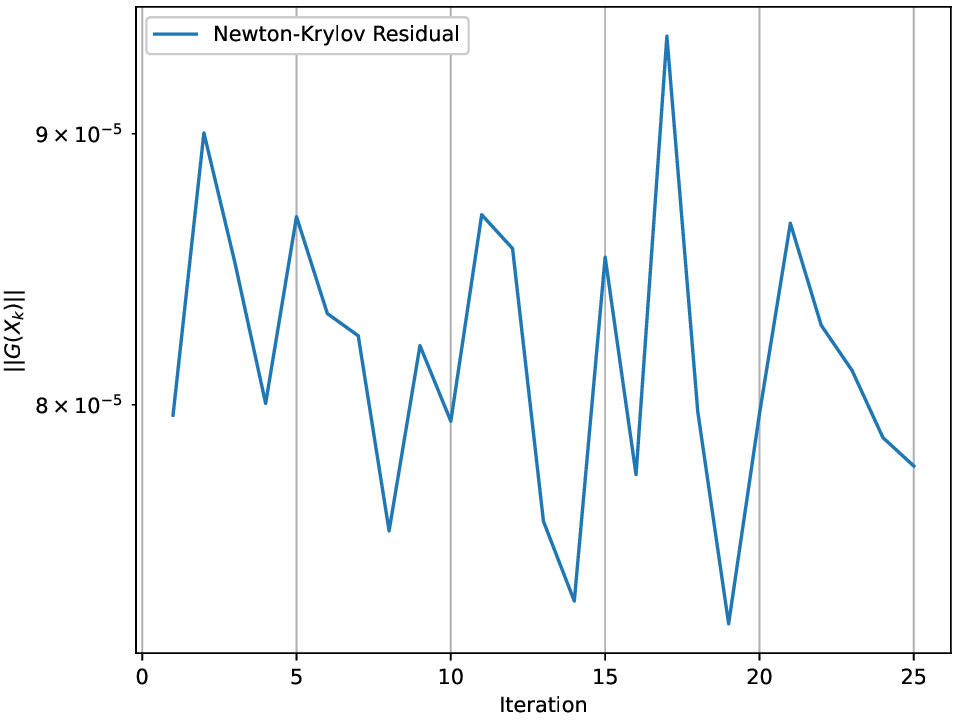}
    \end{subfigure}%
    \begin{subfigure}[b]{0.49\textwidth}
        \centering
        \includegraphics[width=\textwidth]{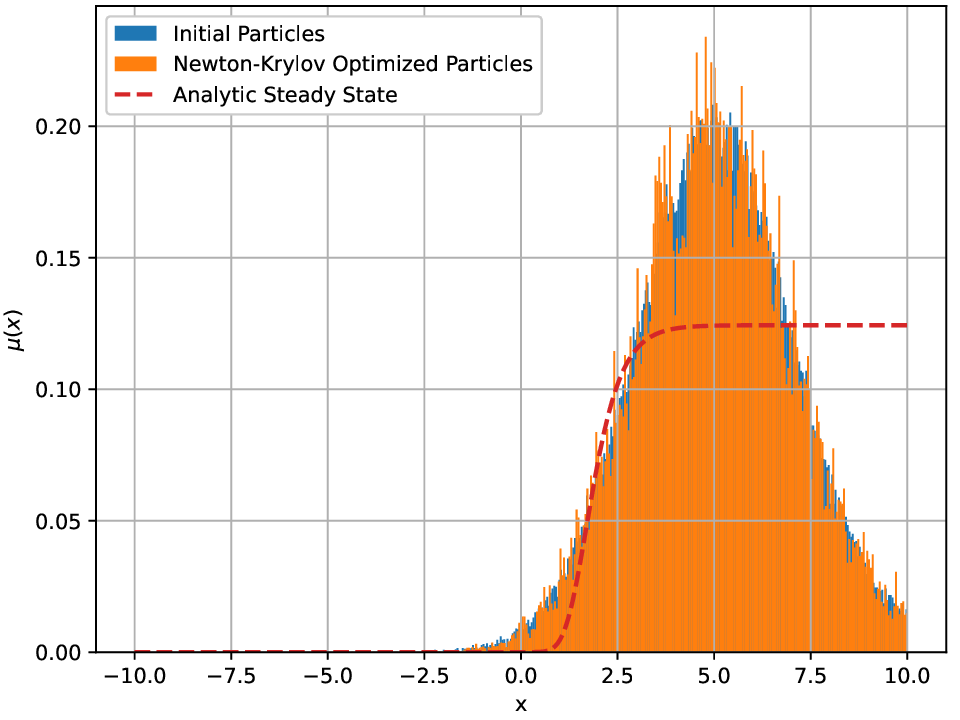}
    \end{subfigure}
    \caption{Numerical results for the Newton--Krylov optimizer for bacterial chemotaxis. Left: Norm of the objective function, $\norm{\tilde{F}(X_k)}$ per iteration. Right: Initial particles (blue), optimized particles (orange) and analytic steady-state distribution (dashed red).}
    \label{fig:nk_particles_chemotaxis}
\end{figure}

\section{Newton--Krylov Framework} \label{sec:nk}
First-order optimizers typically converge slowly to the steady state, as they take only small steps in the direction of the local gradient. As a result, they require (relatively) many iterations and repeated evaluations of the Wasserstein objective and gradient, each of which involves the costly computation of an optimal transport map. This makes first-order methods particularly expensive in Wasserstein-based formulations. We want to recover the second-order performance of Newton–Krylov by solving
\begin{equation}
F(X)=\partial_X W_2^2\!\big(X,\ \phi_h(X)\big)=0
\end{equation}
and approximating Jacobian–vector products as
\begin{equation}
DF(X) \cdot  v \approx  \frac{F(X+\varepsilon v)-F(X)}{\varepsilon}.
\end{equation}
In practice this is ill-posed and noisy for particle OT. The mapping $X\mapsto \partial_X W_2^2(X,\phi_h(X,\xi)$ is only piecewise smooth: as particles move, the optimal transport coupling changes combinatorially, creating kinks where $DF(X)$ does not exist. Difference quotients then mix values across distinct couplings, producing large, non-vanishing errors that blow up as $\varepsilon \to 0$. Stochasticity compounds the problem: tiny perturbations $X \mapsto X+\varepsilon v$ can reshuffle assignments in the empirical OT, injecting high-variance jumps into $F(X+\varepsilon v)-F(X)$. Krylov iterations driven by such JVPs become unstable, and line searches lose reliability. We perform a more detailed error analysis of the Newton--Krylov method for noisy objective functions in section~\ref{subsec:nk_noisy}, before moving on to using Newton--Krylov on smooth representations of particle codes in section~\ref{sec:particles_to_smooth}.

The idea of Newton's method is to iteratively update \(u_{k+1} = u_k + s_k\) by choosing \(s_k\) such that the first order approximation of \(\psi(u_{k+1}) \approx 0\). Explicitly, we want \(s_k\) such that \(\psi(u_k) + D\psi(u_k)s_k = 0\).
The main idea behind the Newton--Krylov method is to approximate the action of the Jacobian of the objective function $\nabla \psi(u_k)$, using finite differences with step size $\varepsilon$
\begin{equation} \label{eq:fd_2}
    D \psi(u_k) \cdot  v \approx \frac{\psi(u_k + \varepsilon v) - \psi(u_k)}{\varepsilon}.
\end{equation}
Since this matrix-free formulation can only approximate matrix-vector products, and not the full Jacobian directly, we must use iterative Krylov methods to solve the linear system
\begin{equation} \label{eq:linear_newton}
    D\psi(u_k) s_k = -\psi(u_k)
\end{equation}
to obtain the next Newton guess $u_{k+1} = u_k + s_k$. Because the Jacobian is not generally symmetric, we will use the GMRES method for solving~\eqref{eq:linear_newton}.

The authors of~\cite{brown2008using} performed an extensive error analysis of the Newton--Krylov method; we will repeat their main findings here. There are three main sources of error in the Newton--Krylov method: (1) approximation and rounding errors in the approximate Jacobian~\eqref{eq:fd_2}, (2) tolerance $\eta_k$ used to solve~\eqref{eq:linear_newton}, and (3) rounding errors in other calculations in addition to these. Each of these errors can be controlled. Indeed, one typically uses $\varepsilon \sim \mathcal{O}\left(\sqrt{\varepsilon_{\text{mach}}}\right)$ where $\varepsilon_{\text{mach}} = 2.2 \ 10^{-16}$ is the machine precision in double precision floating point arithmetic. This limits the first error contribution to the same bound of
$\mathcal{O}\left(\sqrt{\varepsilon_{\text{mach}}}\right)$. For the second error contribution, the authors of~\cite[Theorem 2.12]{brown2008using} proved that if the sequence of GMRES tolerances $\eta_k$, defined such that
\begin{equation}
    \norm{D\psi(u_k) s_k + \psi(u_k)} \leq \eta_k \norm{\psi(u_k)}
\end{equation}
decreases to $0$, then the Newton--Krylov method converges linearly to the exact root of $\psi$, i.e., $u_k \to u_{\infty} = u^*$ where $u^*$ is the exact solution. Their analysis remains true whenever the approximation and rounding errors in~\eqref{eq:fd_2} remain minimal; that is, they are of order $\mathcal{O}\left(\sqrt{\varepsilon_{\text{mach}}}\right)$.

Of course, in practice, the tolerances $\eta_k$ do not decrease in subsequent nonlinear Newton--Krylov iterations; rather, they remain constant at $\eta_k = \eta$ for all $k \geq 1$. In this case, the Newton--Krylov method will always make a deterministic error of size
\begin{equation} \label{eq:backward_nk_error}
    \norm{\psi(u_\infty)} \leq C\left( \sqrt{\varepsilon_{\text{mach}}} + \limsup_{k} |\eta_k| + \varepsilon_{\text{mach}}\right).
\end{equation}
This result can be proven as a consequence of Theorem 2.12 in~\cite{brown2008using}. Here, $C$ is a constant independent of $\varepsilon_{\text{mach}}$ and $\eta_k$ but not of $\psi$ and $h$. The final term in equation~\eqref{eq:backward_nk_error} of $\varepsilon_{\text{mach}}$ is due to (3): general rounding errors in the remaining calculations. Finally, since the condition number of a simple root is $\norm{D\psi(u^*)^{-1}}$, we also obtain a bound for the forward error
\begin{equation}
 \norm{u_\infty - u^*} \leq C \norm{D\psi(u^*)^{-1} }\left( \sqrt{\varepsilon_{\text{mach}}} + \limsup_{k} |\eta_k| + \varepsilon_{\text{mach}} \right)
\end{equation}
in floating point arithmetic.

\subsection{Extending the Error Analysis to Stochastic Timesteppers} \label{subsec:nk_noisy}
We aim for faster convergence to steady state than is attainable with first-order descent methods through gradient-based optimizers. A natural idea is to invoke second-order optimization, yet in the particle setting the underlying derivatives are not fully well defined. The basic idea would be to start from the Wasserstein objective function~\eqref{eq:discrete_W2} and solve for a zero (truncated) gradient 
\begin{equation} \label{eq:nk_particle_objective}
    F(X) = \partial_X \tfrac{1}{2} W_2^2\left(X, \phi_h(X)\right) = X - \Pi^\star \phi_h(X),
\end{equation}
where $\Pi^\star$ is the optimal transport map from $X$ to $Y = \phi_h(X)$. For any finite number of particles $N$ there will be a noise term for each evaluation of the Wasserstein objective function. Indeed,
\begin{equation}
    \tfrac{1}{2} W_2^2(X, \phi_h(X)) = \tfrac{1}{2}W_2^2(\mu, \text{law}(Y)) + \frac{\xi}{\sqrt{N}},
\end{equation}
where $\mu$ is the distribution of samples $X$. 
For illustrative purposes, let us suppose that the noise term $\xi$ is independent of $X$.
The gradient objective function~\eqref{eq:nk_particle_objective} will then also include a noise term
\begin{equation} \label{eq:nk_particle_objective_noise}
    \tilde{F}(X) = F(X) + \frac{\zeta}{\sqrt{N}}
\end{equation}
where the contributions per-component of $\zeta \in \mathbb{R}^N$ add up to $\xi$.

Any second-order optimization method will require Jacobians of the objective function in equation~\eqref{eq:nk_particle_objective}. In the Newton--Krylov method, the action of the Jacobian of $F$ in the direction of $v$ is approximated by directional finite differences. Under the assumption that subsequent evaluations of $F$ introduce independent noise terms $\zeta_1$ and $\zeta_2$, approximating the action of the Jacobian $D F$ on a vector $v$ introduces an additional variance term that scales unfavorably as the step size $\varepsilon \to 0$
\begin{equation} \label{eq:nk_particles_Gv}
D \tilde{F}(X)v \approx \frac{\tilde{F}(X+\varepsilon v) - \tilde{F}(X)}{\varepsilon} = \frac{F(X+\varepsilon v) - F(X)}{\varepsilon} + \frac{\zeta_1 - \zeta_2}{\varepsilon\sqrt{N}}.
\end{equation}
Since $\zeta_1$ and $\zeta_2$ are independent, for example, through independent random numbers in the timestepper, and $\varepsilon$ is typically small (say $\sqrt{\varepsilon_{\text{mach}}} \approx 10^{-8}$), the resulting noise term $(\zeta_1 - \zeta_2) / \varepsilon\sqrt{N}$ will dominate the Jacobian-vector product. The estimated Jacobian therefore carries essentially no usable descent information, and the Newton–Krylov method will degenerate into stochastic oscillations rather than a systematic search direction, failing to converge toward the true steady state. Continuing the error analysis from Section~\ref{sec:nk}, the asymptotic error of the Newton--Krylov method is governed by a fundamental bias–variance trade-off,
\begin{equation} \label{eq:nk_particle_error}
\norm{X_\infty - X^\star} \leq C\norm{F(X^\star)^{-1}}\left( \varepsilon + \limsup_{k} |\eta_k| + \varepsilon_{\text{mach}} + \frac{2\sigma}{\varepsilon \sqrt{N}} \right)
\end{equation}
where $\sigma$ is the standard deviation of $\xi$. The above equation highlights the challenge of second-order methods on the level of particles: although they carry the potential for rapid convergence, the noise inherent in particle-based Jacobians fundamentally limits their accuracy. However, we can use equation~\eqref{eq:nk_particle_error} to determine $\varepsilon$ so that the approximation and noise terms are balanced. It can be seen that the minimal total error is achieved when $\varepsilon = \mathcal{O}\left(N^{-1/4}\right)$. As an example, for $N=10^4$, we would need to use $\varepsilon = 0.1$ -- which is much larger than $\sqrt{\varepsilon_{\text{mach}}}$ in the absence of noise.

However, even with this optimal finite-difference step size, Newton--Krylov at the particle level cannot discern any gradient information from~\eqref{eq:nk_particles_Gv}. Let us consider the bacterial chemotaxis example again from section~\ref{subsubsec:w2_chemotaxis}. In Figure~\ref{fig:nk_particles_chemotaxis} we show the Newton--Krylov optimization result after $100$ steps with $N=10^5$ particles and $\varepsilon = 0.1$. The optimized particles barely moved from their initial distribution. Figure~\ref{fig:nk_particles_chemotaxis} shows the norm of the objective function~\eqref{eq:nk_particle_objective_noise} in all iterations, revealing that the error remains essentially unchanged in expectation. The reason for this behavior is that although each application of the timestepper moves the particles closer to the steady state, the subsequent Newton–Krylov update perturbs them in essentially random directions. As a result, the timestepper is forced to repeatedly recover the same progress, preventing any net convergence.

\subsection{Determining $h$ from the Spectral Gap}
Beyond our Newton–Krylov error analysis, it is instructive to examine how information about the steady state $u^*$ is encoded in timestepper $\phi_h$ and residual $\psi(u) = u-\phi_h(u)$. Linearization of the PDE
\begin{equation}
    \partial_t u_t = f(u_t)
\end{equation}
around $u^*$ gives $J = D f(u^*)$ with eigenvalues $\lambda_i$. This linearization of the flow map $\phi_h$ is then $D \phi_h(u^*) = \exp\left(h J\right)$ so that
\begin{equation}
    D \psi(u^*) = I - \exp\left(h J\right).
\end{equation}
Let $\mu_i$ be the eigenvalues of $D \psi(u^*)$. This mapping transfers the continuous-time stability spectrum of $f$ into a discrete-time residual spectrum: eigenvalues with $\operatorname{Re}\lambda_i \approx 0$ (slow or neutral modes) yield $|\mu_i|\approx 0$, while strongly damped modes $(\operatorname{Re}\lambda_i \ll 0)$ are pushed toward $|\mu_i|\approx 1$.  As $h$ increases, a clear spectral gap typically opens between these two clusters.  Choosing $h$ too small leaves the modes entangled; choosing it too large pushes all $\mu_i$ towards $1$ and degrades conditioning. The optimal integration window therefore balances the resolution of slow modes with numerical stability and cost.

Let $u$ be the current Newton--Krylov guess, a point assumed close to the steady-state solution $u^*$. It is safe to say that the eigenvalues and eigenvectors of $D\phi_h(u)$ will not deviate from $\lambda_i$ and $v_i$ respectively too much. Also let $\lambda_1$ be the eigenmode corresponding to the dominant, near steady-state dynamics. For a stable PDE solution, this means $0 > \text{Re}(\lambda_i) > \text{Re}(\lambda_i)$ for all $i > 1$. By perturbing the current solution $u$ in the direction of $v = \sum_{i=1}^n a_i v_i$, the flow map $\phi_h(u)$ will change to first order by
\begin{equation} \label{eq:Jac_expansion}
    D \phi_h(u) v = D \phi_h(u) \left(\sum_{i=1}^n  a_i \phi_h(u) v_i \right) = \sum_{i=1}^n a_i \exp(h \lambda_i) v_i,
\end{equation}
where we used the fact that $v_i$ are (approximate) eigenvectors of $D\phi_h(u)$. In this expansion, the factor $\exp(h \lambda_1)$ will be larger than all other $\exp(h \lambda_i)$ in real part. Similar to the analysis of the power iteration, the above expansion can be rewritten as the dominant term plus high-frequency corrections
\begin{equation}
     D \phi_h(u) v =  \exp(h \lambda_1)\left(a_1 v_1 + \sum_{i=2}^n a_i \frac{\exp(h \lambda_i)}{ \exp(h \lambda_1)} v_i \right).
\end{equation}
When the integration time $h$ is large enough, the other terms in this expansion will be small compared to the first term $\exp(T \lambda_1) a_1 v_1$, which represents the steady state solution. 

To ensure that the Newton–Krylov method can discern meaningful descent information from the Jacobian action  $D\phi_h(u) \cdot v$, rather than being dominated by high-frequency `noise' from the remaining terms in~\eqref{eq:Jac_expansion}, a high signal-to-noise ratio is required. Mathematically, we require
\begin{equation}
    \left|\frac{\exp(h \lambda_i)}{\exp(h \lambda_1)} \right|\ll 1
\end{equation}
and this provides a criterion for choosing the minimal integration time $h$. Defining the signal-to-noise ratio as $1/\eta$ with $\eta \ll 1$, we desire
\begin{equation*}
    \exp\left(h \left(\text{Re}(\lambda_i) - \text{Re}(\lambda_1)\right)\right) \leq \eta
\end{equation*}
 for all $i > 1$, and therefore
\begin{equation}
    h \geq \frac{\ln \eta}{\text{Re}(\lambda_2) - \text{Re}(\lambda_1)}
\end{equation}
because $\text{Re}(\lambda_i) \leq \text{Re}(\lambda_2) < \text{Re}(\lambda_1) < 0$. In practice, we typically want a signal-to-noise ratio of $1/\eta \geq 10$, and the above equation gives the minimal value for the total integration time $h$. Increasing $h$ beyond this bound will result in more expensive computations with little benefit of increased speed of convergence to the steady-state solution.

\subsection{Newton--Krylov for Mean-Field Equations}
As a first step towards using the Newton--Krylov method for calculating steady-state solutions, let us look at the case when the underlying model for the particle timestepper is a partial differential equation, specifically a mean-field or Fokker-Planck equation.
We prefer the terminology of the mean-field equation since the Fokker-Planck equation is only valid for linear models; however, we also consider nonlinear stochastic models here.

We should note that our ultimate goal is to compute steady-state distributions of particle timesteppers. The mean-field PDE, although a useful mathematical concept, is not immediately known or available for most systems and particle codes. In the few cases where it is known, the mean-field model is usually an approximation through some kind of closure. However, in the settings where the mean-field model is known and exact - like the linear Fokker-Planck equation - steady-state distributions are directly encoded, and we can compute it using a Newton--Krylov method. We investigate this idea in this section.

Suppose we have a collection of particles or agents $\{X_i\}_{i=1}^n$ that follow a stochastic process. We will discuss two examples below. In the limit of $n \to \infty$, the particles or agents are distributed according to a time-dependent probability distribution $\mu_t(x)$, where $x$ represents the spatial locations. This probability distribution, in turn, follows a deterministic and coarse mean-field equation of the form
\begin{equation} \label{eq:mean-field}
    \partial_t \mu_t(x) = F(\mu_t(x))
\end{equation}
with the appropriate boundary conditions. On this level, all stochasticity has been removed, and we can simply use the regular Newton--Krylov method to compute the steady-state distributions. Most mean-field models never have their right-hand sides coded, so we will rely on a coarse timestepper $\Phi_h(\mu_t)$ that progresses the current distribution $\mu_t$ over a time window of size $h$ to $\mu_{t+h}$. Analogous to equation~\eqref{eq:psi}, we will solve
\begin{equation}
    \Psi(\mu) = \mu - \Phi_h(\mu) = 0
\end{equation}
for $\mu$ --- the steady-state distribution. We will demonstrate this approach on two examples from last section: the linear chemotaxis model (section~\ref{subsubsec:chemotaxis}) and the nonlinear economic agents model (section~\ref{subsusbec:agentmodel}).

\subsubsection{Bacterial chemotaxis model} \label{subsubsec:chemotaxis}
The coarse Fokker-Planck equation for the spatial distribution $\mu(x,t)$ of the bacteria from section~\ref{subsubsec:w2_chemotaxis} reads 
\begin{equation} \label{eq:keller-segel}
 \partial_t \mu = -\partial_x(\chi(S(x))S_x(x)\mu) + D\partial_{xx}\mu,
\end{equation}
and describes how the concentration of bacteria changes over time. This equation is sometimes known as the Keller-Segel model for chemotaxis~\cite{stevens2000derivation}. The correct no-flux boundary conditions are $J(\pm L) = 0$ with flux
\[
J(x) = \chi(S(x))S_x(x)\mu(x,t) - D \mu_x(x,t).
\]
It admits a unique steady-state distribution of the form
\begin{equation} \label{eq:chemotaxis_ss}
 \mu(x) = Z^{-1}\exp\left(\frac{1}{D}\int^{S(x)}_{-1} \chi(S) dS\right),
\end{equation}
where $Z$ is the normalization constant. See Appendix~\ref{app:chemotaxis} for a short derivation of~\eqref{eq:chemotaxis_ss}.

In our example, $S(x) = \tanh(x)$ and $\chi(S) = 1 + \frac{1}{2}S^2$. We compute the steady-state distribution using the Newton--Krylov method based on a finite volumes discretization of~\eqref{eq:keller-segel} with $N=1000$ equidistant grid points representing the volume centers. That is, the solution will be a vector $u \in \mathbb{R}^N$ representing the values of the steady-state distribution at the fixed volume centers. For timestepping, we use an explicit Euler time discretization over a time window of size $h = 1$ second. 

Figure~\ref{fig:chemotaxis_fp}(a) shows the steady-state distribution obtained by the Newton--Krylov method alongside the analytic formula~\eqref{eq:chemotaxis_ss}. We observe an excellent correspondence between these two distributions. Additionally, in Figure (b), we observe that the Newton--Krylov method reaches a residual of $10^{-8}$, the minimal obtainable residual according to \eqref{eq:backward_nk_error}, after only $9$ steps.
\begin{figure}[h]
    \centering
    \begin{subfigure}[b]{0.5\textwidth}
        \includegraphics[width=\textwidth]{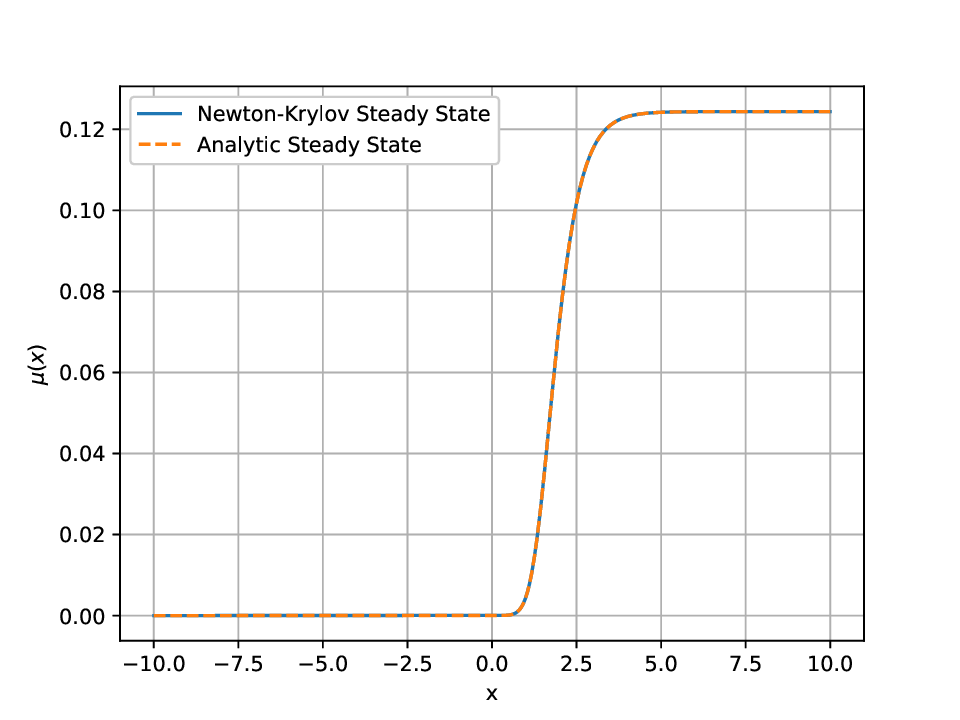}
    \end{subfigure}%
    \begin{subfigure}[b]{0.5\textwidth}
        \includegraphics[width=\textwidth]{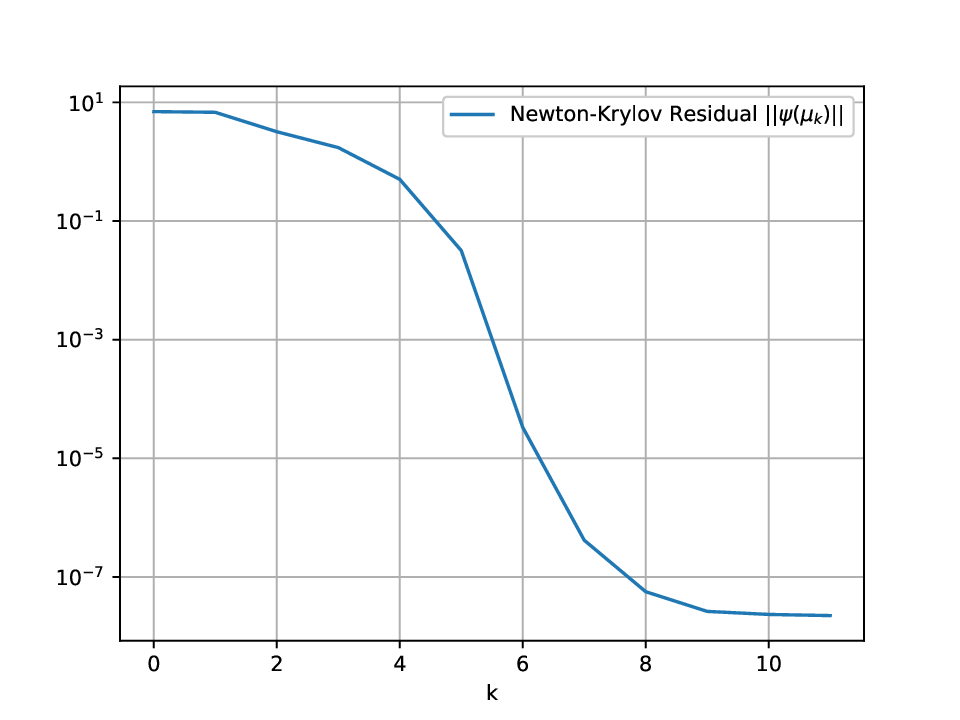}
    \end{subfigure}
    \caption{(Left) Steady-state distribution of the Keller-Segel model: Newton--Krylov (blue) vs. analytic solution (dashed orange). (Right) Newton--Krylov residual per iteration.}
    \label{fig:chemotaxis_fp}
\end{figure}

\begin{remark}
 Although the Fokker-Planck equation~\eqref{eq:keller-segel} and the $\Psi$-function are linear in $\mu_t$, the Newton--Krylov method is not guaranteed to converge to the steady-state distribution in one step, unlike the regular second-order Newton method. The reasons are twofold. First, the Newton--Krylov method does not solve the exact Jacobian system $D \psi(x_k) s_k + \psi(x_k) = 0$, rather an approximation $D \tilde{\psi}(x_k) s_k  + \psi(x_k) = 0$. Secondly, we typically use a fixed tolerance $\eta_k = \eta > 0$ for all linear system solves. So instead of converging to the real steady state distribution, the Newton--Krylov method will stay within a tolerance $\eta$ of the exact steady state; see also equation~\eqref{eq:backward_nk_error}.
\end{remark}

\subsubsection{Economic Agents Model} \label{subsusbec:agentmodel}
As a second demonstration of using Newton--Krylov to compute steady-state distributions of mean-field equations, let us reconsider the example of economic agents trading stocks. For this example, we look at the deterministic density $\mu(x,t)$ of the economic agents at location $x$ and time $t$. It can be shown that an approximate mean-field equation exists, of the form
\begin{equation} \label{eq:meanfield_agents}
    \partial_t \mu = \frac{1}{2} \sigma^2(t) \partial_{xx} \mu + \partial_x\left(b(x,t) \mu\right) + \left(J^+ + J^-\right) \delta(x).
\end{equation}
In this model, $b(x,t)$ and $\sigma^2(t)$ are the time-dependent drift and diffusivity of the agents, and $J^+$ and $J^-$ are integral operators with $\delta$ the Dirac-delta distribution. Further details on this model can be found in Appendix A of~\cite{fabiani2024task}. Importantly, this mean-field PDE is nonlinear, non-local and admits multiple steady-state distributions. 

\begin{figure}[!ht]
    \centering
    \includegraphics[width=0.7\textwidth]{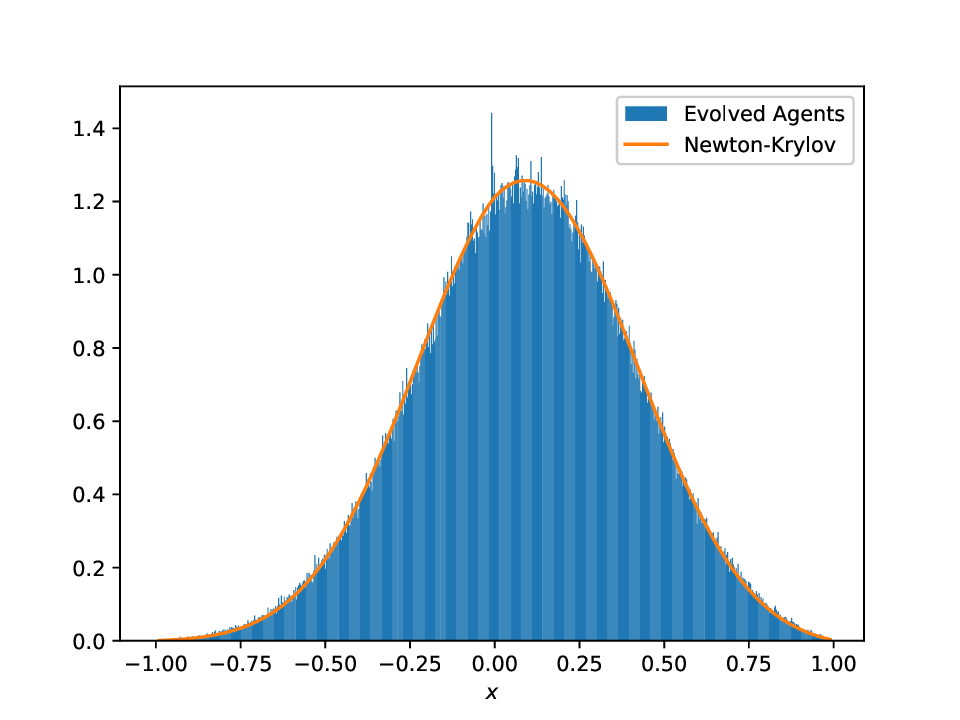}
    \caption{Steady-state distribution from Newton--Krylov versus time evolution of stochastic agents.}
    \label{fig:agent_pde_ss}
\end{figure}

For the following numerical experiment, we discretized~\eqref{eq:meanfield_agents} using a finite-volume method with $N=100$ equidistant volume centers. Our PDE timestepper uses an explicit Euler method with step size $10^{-4}$, and we integrate over a time window of size $h=11$ second. Figure~\ref{fig:agent_pde_ss} shows the steady-state distribution calculated by Newton--Krylov method in orange. It also shows the histogram distributions of the random agents obtained by timestepping the stochastic model to steady state. There is an excellent visual agreement between the two, as well as in the average agent position $\langle X_i \rangle$ of $0.0892$.

\section{From Particles to Smooth Representations} \label{sec:particles_to_smooth}
Newton–Krylov may not work on the level of noisy particles, but it can work if we add smoothness to the optimization criterion; enter the Inverse Cumulative Density Function. In one dimension, optimal transport with respect to the Wasserstein-2 distance admits an especially convenient formulation. Composing the ICDF with the CDF of the particles is exactly equivalent to computing the optimal transport map, ensuring that the geometry of the problem is preserved. Furthermore, if the ICDF is known in fixed (grid-) percentiles, we can interpolate it using splines, making the (interpolated) ICDF a smooth and monotonic representation of the empirical measure. The ICDF is continuously differentiable, so we can employ Newton-type methods to calculate steady states. Finally, evaluating the ICDF at prescribed percentiles corresponds to drawing samples from the distribution.

These combined elements make it possible to define a timestepper directly at the level of the ICDF. Starting from the current ICDF, we first sample $N$ particles by evaluating the ICDF in the (fixed) $k/N$ percentiles. Next, we propagate these samples through the stochastic particle timestepper, after which we construct the new ICDF by retaining some of the (sorted) particles. In this representation, derivatives are well defined, and Newton--Krylov iterations can be applied meaningfully, recovering the fast convergence that is lost in the purely particle-based formulation.

\subsection{The ICDF-to-ICDF Timestepper}
Given a one-dimensional probability density $\mu(x)$ on a fixed interval $[a,b]$, the cumulative density function is defined as
\begin{equation} \label{eq:1d_cdf}
 F(x) = \int_{a}^x \mu(y)dy, \ \ x \in [a,b]
\end{equation}
Because $F$ is monotonic, it is injective and inverse cumulative density $F^{-1}(p)$ exists. This ICDF is defined for $p \in [0,1]$ and $x = F^{-1}(p)$ corresponds to the $p$-th percentile of $\mu$.

For a discrete set of (sorted) particles $\{X_n\}_{n=1}^N$, the cumulative density function is piecewise continuous with jumps at the locations
\begin{equation}
 F(X_n) = \frac{n}{N}.
\end{equation}
The inverse cumulative density function is also piecewise continuous defined in the discrete percentiles $F^{-1}\left(n/N\right) = X_n$. To reduce the impact of precise particle locations $X_n$, we propose to evaluate the ICDF in a fixed percentile grid $p_k, k=1,\dots,K$ - typically uniform between $0$ and $1$ - with $K \ll N$. This allows us to construct a coarse ICDF-to-ICDF timestepper
\begin{equation}
    \Phi_h\left(F^{-1}_t\right) = F^{-1}_{t+h}
\end{equation}
in four stages:
\begin{itemize}
    \item[1.] Interpolate $F^{-1}(t)$ on the fixed grid $p_k, k=1,\dots,K$ using a differentiable spline;
    \item[2.] Sample $\{X_n(t)\}$ by evaluating $F^{-1}(t)$ in uniform percentiles $n/N$;
    \item[3.] Propagate the particles to $\{X_n(t+h)\}$ using the particle timestepper $\phi_h$;
    \item[4.] Construct the new ICDF by sorting $\{X_n(t+h)\}$ and retaining every $K$-th particle.
\end{itemize}
A schematic of the ICDF-to-ICDF timestepper is shown in Figure~\ref{fig:icdftoicdf}.

\begin{figure}[!ht]
  \centering
  \includegraphics[width=0.8\textwidth]{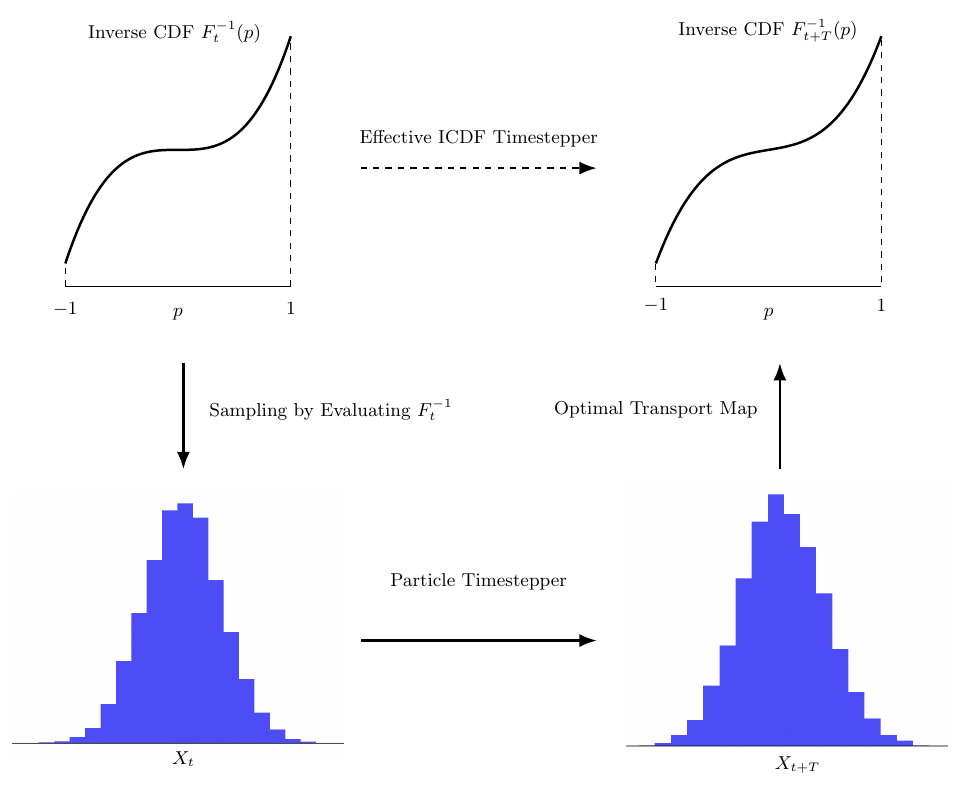}
  \caption{Schematic of the effective ICDF–to-ICDF timestepper.}
  \label{fig:icdftoicdf}
\end{figure}

By construction, both the ICDF representation and its spline approximation ensure that the timestepper maps one smooth ICDF into another. In this setting, finite-difference approximations of the directional derivative $D\Phi_h \cdot v$ no longer suffer from variance-amplifying $1/\varepsilon$ terms, since the noise inherent at the particle level has been averaged out by the smooth representation. As a result, Newton–Krylov iterations regain their expected fast convergence toward the steady-state distribution, now expressed consistently through the ICDF. The associated ICDF-based objective function is given by
\begin{equation} \label{eq:icdf_residual}
\Psi(F^{-1}) = F^{-1}- \Phi_h(F^{-1}).
\end{equation}
which is zero at steady state. Notice that the error bound in equation~\eqref{eq:nk_particle_error} remains applicable, since the ICDF-to-ICDF timestepper is ultimately constructed from the underlying particle timestepper~$\phi_h$. However, because the ICDF provides a smooth aggregate representation of the ensemble, the effective noise variance~$\sigma^2$ is substantially reduced. As a result, the stochastic error in the finite-difference approximation becomes small enough for the method to operate within the stable, “workable” Newton–Krylov regime, where second-order convergence is visible.

\subsection{Three Numerical Illustrations}
We now demonstrate the performance of the Newton–Krylov method on the ICDF-to-ICDF timestepper through three examples. These examples illustrate the accuracy, robustness, and noise tolerance of Newton--Krylov on smooth particle representations across increasingly complex systems.

\subsubsection{The Bimodal Distribution}
As a first demonstration of the Newton–Krylov method applied to the ICDF-to-ICDF timestepper, we consider a bimodal distribution defined by
\begin{equation}
    \mu(dx) = \frac{1}{Z}\exp\left(-\frac{1}{2}\left(x^2 - 1\right)^2\right).
\end{equation}
The particle timestepper is based on an Euler-Maruyama discretization of the overdamped Langevin dynamics
\begin{equation}
    dX_t = \nabla\log\mu(X_t) dt + \sqrt{2} dW_t
\end{equation}
with step size $10^{-3}$. The time-integration horizon is $h = 0.1$.

The unknowns in the ICDF-to-ICDF formulation are the values of the ICDF on a fixed percentile grid $p_k = (k - 0.5)/100$ for $1 \leq k \leq 100$. During each evaluation of the timestepper, we interpolate the ICDF using piecewise linear functions, sample $N = 10^5$ particles at equidistant percentiles, propagate them forward using the microscopic timestepper~$\phi_h$ over a time window of length~$h = 0.1$, and reconstruct the new ICDF by retaining every $1000$th particle (i.e., $N / 100$) in sorted order (i.e. after applying the optimal transport map). These retained particle locations define the updated ICDF on the fixed percentile grid at time~$h$. Jacobian–vector products are approximated by finite differences with a step size of~$\varepsilon = 10^{-2}$.

\begin{figure}[h]
    \centering
    \begin{subfigure}[b]{0.49\textwidth}
        \centering
        \includegraphics[width=\textwidth]{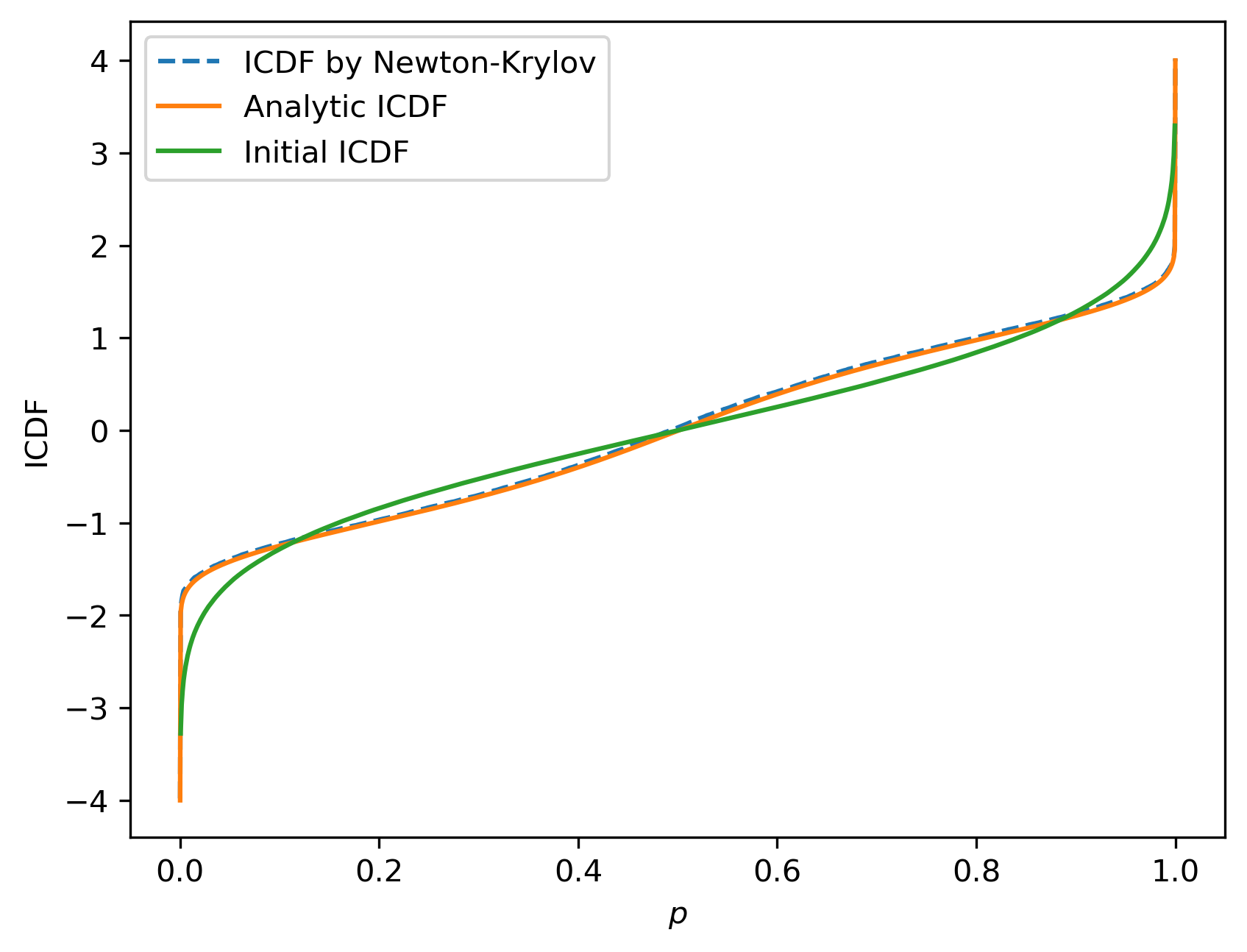}
    \end{subfigure}%
    \begin{subfigure}[b]{0.49\textwidth}
        \centering
        \includegraphics[width=\textwidth]{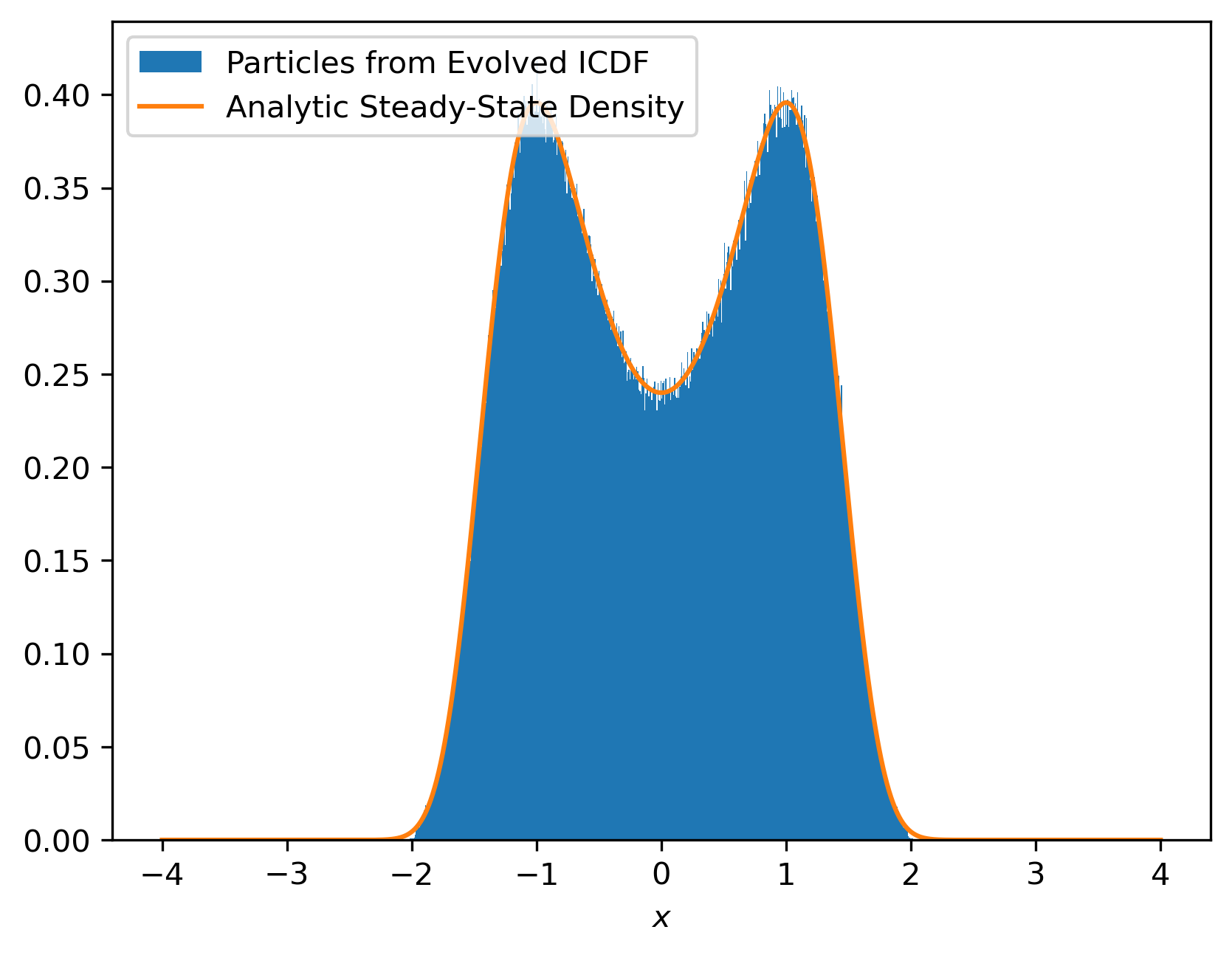}
    \end{subfigure}
    \caption{(Left) Initial ICDF (green), true steady-state (orange) and steady-state ICDF computed by Newton--Krylov (dashed blue). (Right) Particles sampled from the Newton--Krylov ICDF (blue) and corresponding steady-state bimodal density.}
    \label{fig:bimodal_icdf}
\end{figure}

The convergence behavior of Newton–Krylov on the ICDF timestepper is shown in Figure~\ref{fig:bimodal_icdf}. The left panel compares the ICDF of the initial guess—corresponding to a standard Gaussian—with the steady-state ICDF obtained after convergence. The right panel displays the histogram of $N = 10^5$ particles sampled from this steady-state ICDF, showing excellent agreement with the analytical bimodal invariant distribution. For completeness, we also report the residual norm $\norm{\Psi\left((F^{-1})^{(k)}\right)}$ per Newton–Krylov iteration~$k$, which decreases rapidly to a noise floor around $10^{-2}$. This residual level is consistent with the theoretical prediction from equation~\eqref{eq:nk_particle_error}.

The Newton--Krylov method converges to the noise floor in $11$ iterations, for a total of $63$ evaluations of the ICDF timestepper (GMRES needs a few function evaluations per global Newton iteration). This technically corresponds to $6.3$ in-simulation seconds of propagating particles to reach the steady state distribution - {\em much less than the hundreds of seconds that would be required to reach steady state using the Euler-OT timestepper}. The total in-simulation time is the most important metric for comparing optimizers.

\subsubsection{Bacterial Chemotaxis}
As a second example, we revisit the bacterial chemotaxis model introduced in section~\ref{subsubsec:w2_chemotaxis}. The ICDF is again discretized on a fixed percentile grid $p_k = (k - 0.5)/100$, for $1 \leq k \leq 100$. At each timestepper evaluation, we interpolate the ICDF using piecewise linear functions, sample $N = 10^5$ particles at equidistant percentiles, and propagate them over a time horizon of $h = 1$ second using the microscopic particle timestepper described in section~\ref{subsubsec:w2_chemotaxis}. The propagated ensemble is then projected back onto the ICDF representation by sorting the particle locations. The initial distribution is again taken to be a Gaussian with mean $5$ and standard deviation $2$, and Jacobian–vector products are again evaluated using finite differences with step size $\varepsilon = 10^{-1}$.

\begin{figure}[h]
    \centering
    \begin{subfigure}[b]{0.49\textwidth}
        \centering
        \includegraphics[width=\textwidth]{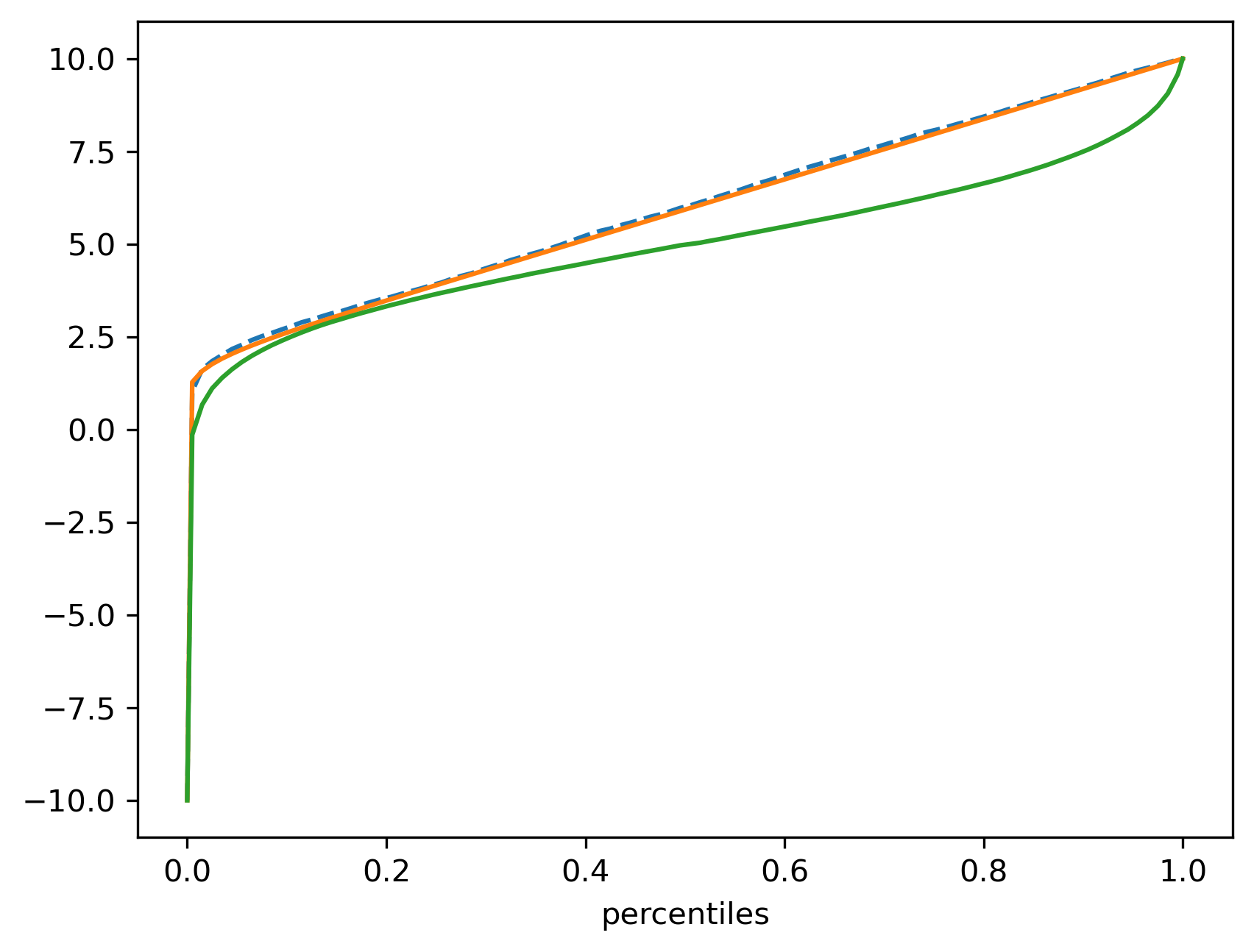}
    \end{subfigure}%
    \begin{subfigure}[b]{0.49\textwidth}
        \centering
        \includegraphics[width=\textwidth]{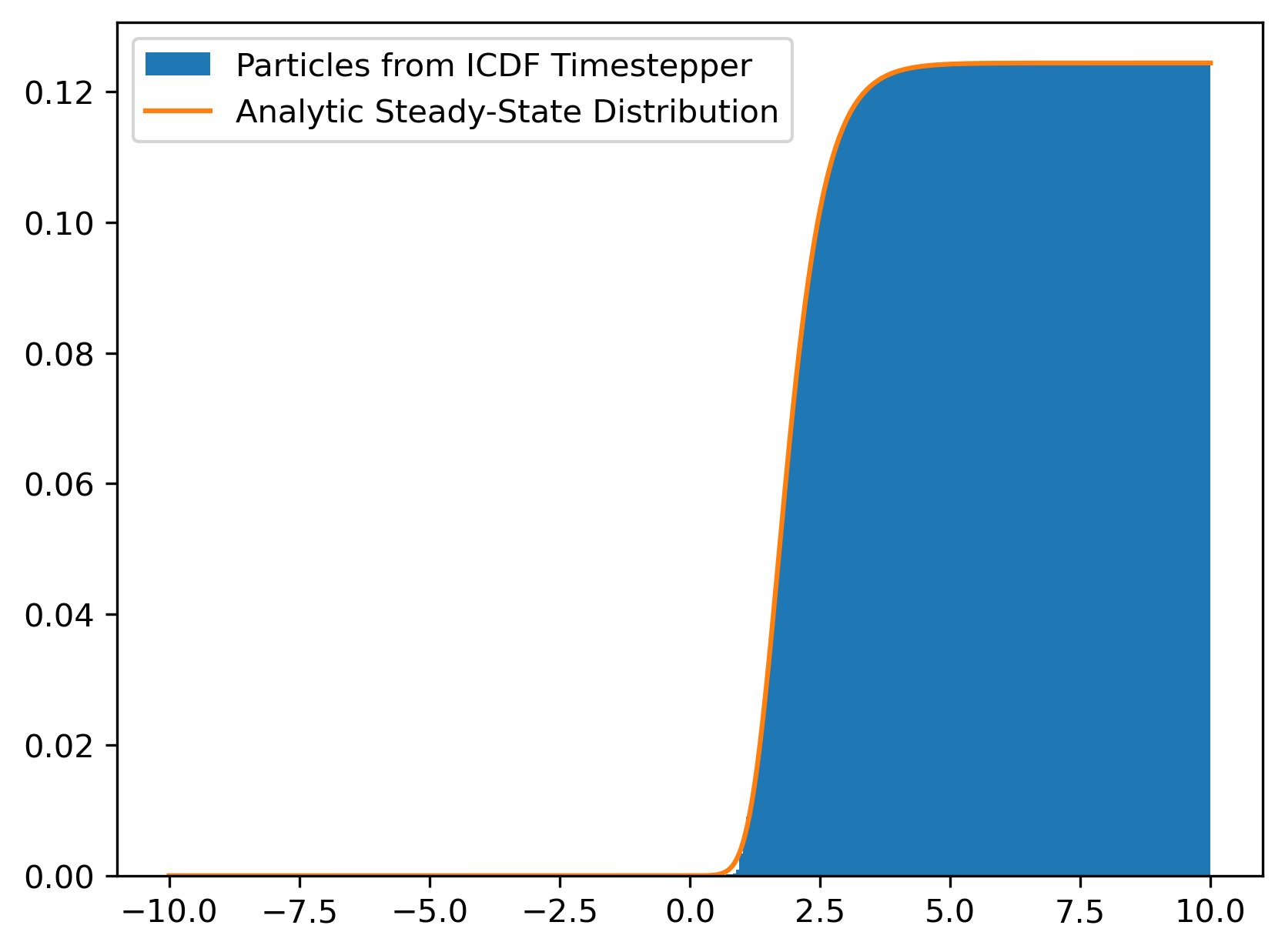}
    \end{subfigure}
    \caption{(Left) Initial ICDF(green), true steady-state (orange) and steady-state ICDF computed by Newton--Krylov (dashed blue). (Right) Particles sampled from the Newton--Krylov ICDF (blue) and corresponding steady-state density~\eqref{eq:chemotaxis_ss} in orange.}
    \label{fig:chemotaxis_icdf}
\end{figure}

Figure~\ref{fig:chemotaxis_icdf} illustrates the convergence of the Newton–Krylov method on this ICDF-to-ICDF timestepper. The left panel shows the initial ICDF (corresponding to the Gaussian guess), the true steady-state ICDF obtained from long-time simulation, and the ICDF recovered by Newton–Krylov. The latter two curves show close agreement, confirming that the method correctly identifies the stationary distribution. The right panel displays the histogram of $N = 10^5$ particles sampled from the steady-state ICDF computed by Newton–Krylov, which closely matches the invariant chemotactic density. The Newton--Krylov method is able to reach this steady-state profile after $11$ Newton iterations and $114$ objective evaluations, corresponding to $114$ seconds of in-simulation time ($h=1$ second). Regular simulation using the Euler-OT timestepper requires about $300$ seconds to reach steady state from the same initial distribution.

\subsubsection{Economic Agents}
As a final example, we apply the ICDF-to-ICDF timestepper framework to the economic agents model introduced in section~\ref{subsubsec:w2_agents}. The microscopic timestepper $\phi_h$ for this system is the McKean–Vlasov Euler scheme described in~\cite{fabiani2024task}. This model is nonlinear and non-local, and its corresponding mean-field equation~\eqref{eq:meanfield_agents} is known to admit multiple steady-state distributions.

The construction of the ICDF-to-ICDF timestepper is identical to the prior two examples. That is, we discretize the ICDF on a fixed percentile grid $p_k = (k - 0.5)/100$, for $1 \leq k \leq 100$ and $N = 10^5$ particles are sampled uniformly in percentile space. The microscopic timestepper $\phi_h$ is then applied to evolve these particles over a time horizon of $h = 1$ second. After propagation, the updated ICDF is reconstructed by sorting the particle ensemble retaining every $1000$th particle. The initial ICDF corresponds to a Gaussian distribution centered around $x = 0$ with standard deviation $0.1$. Jacobian–vector products of the timestepper are again computed using finite differences with step size $\varepsilon = 10^{-1}$.

\begin{figure}[h]
    \centering
    \begin{subfigure}[b]{0.49\textwidth}
        \centering
        \includegraphics[width=\textwidth]{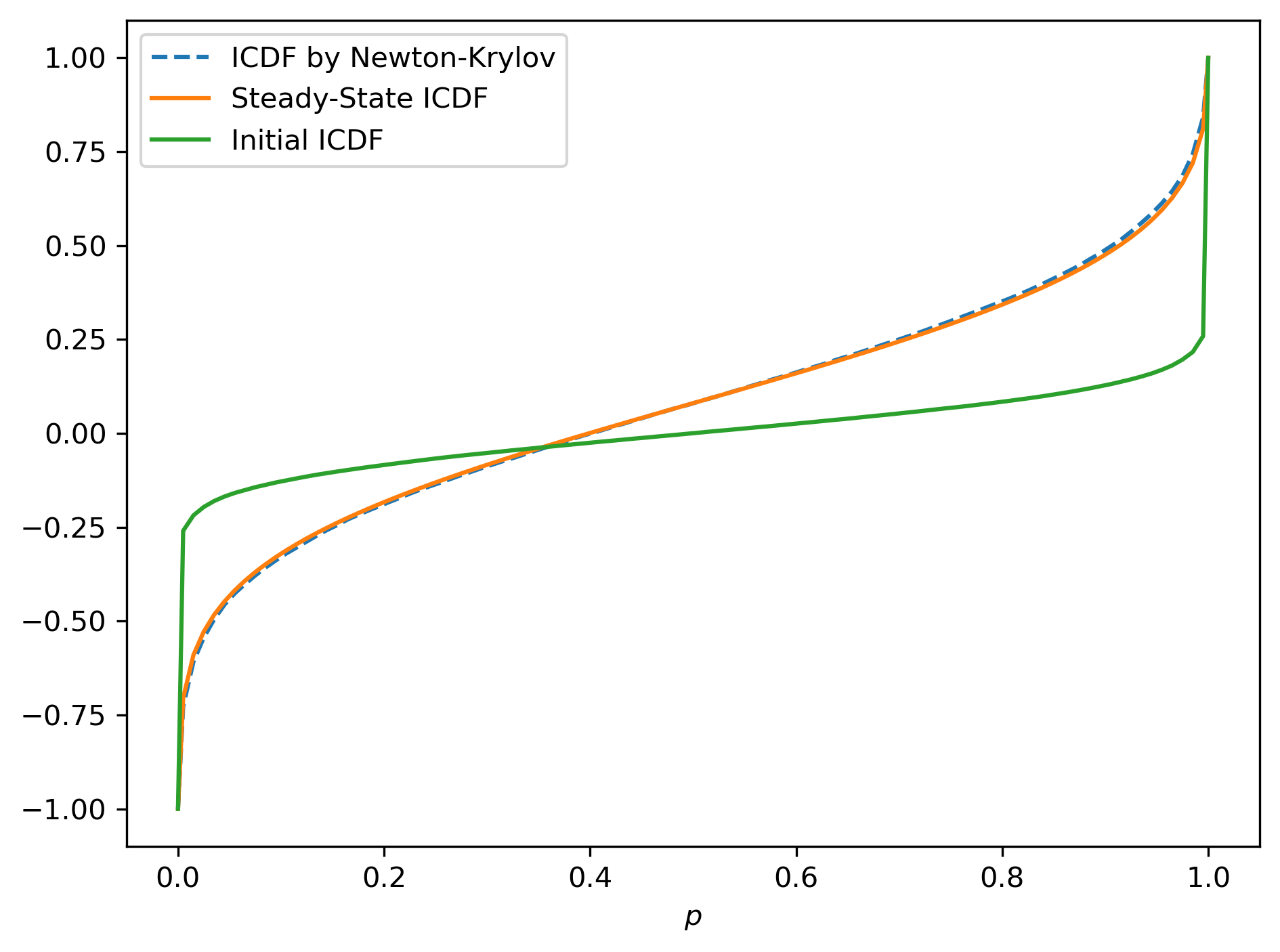}
    \end{subfigure}%
    \begin{subfigure}[b]{0.49\textwidth}
        \centering
        \includegraphics[width=\textwidth]{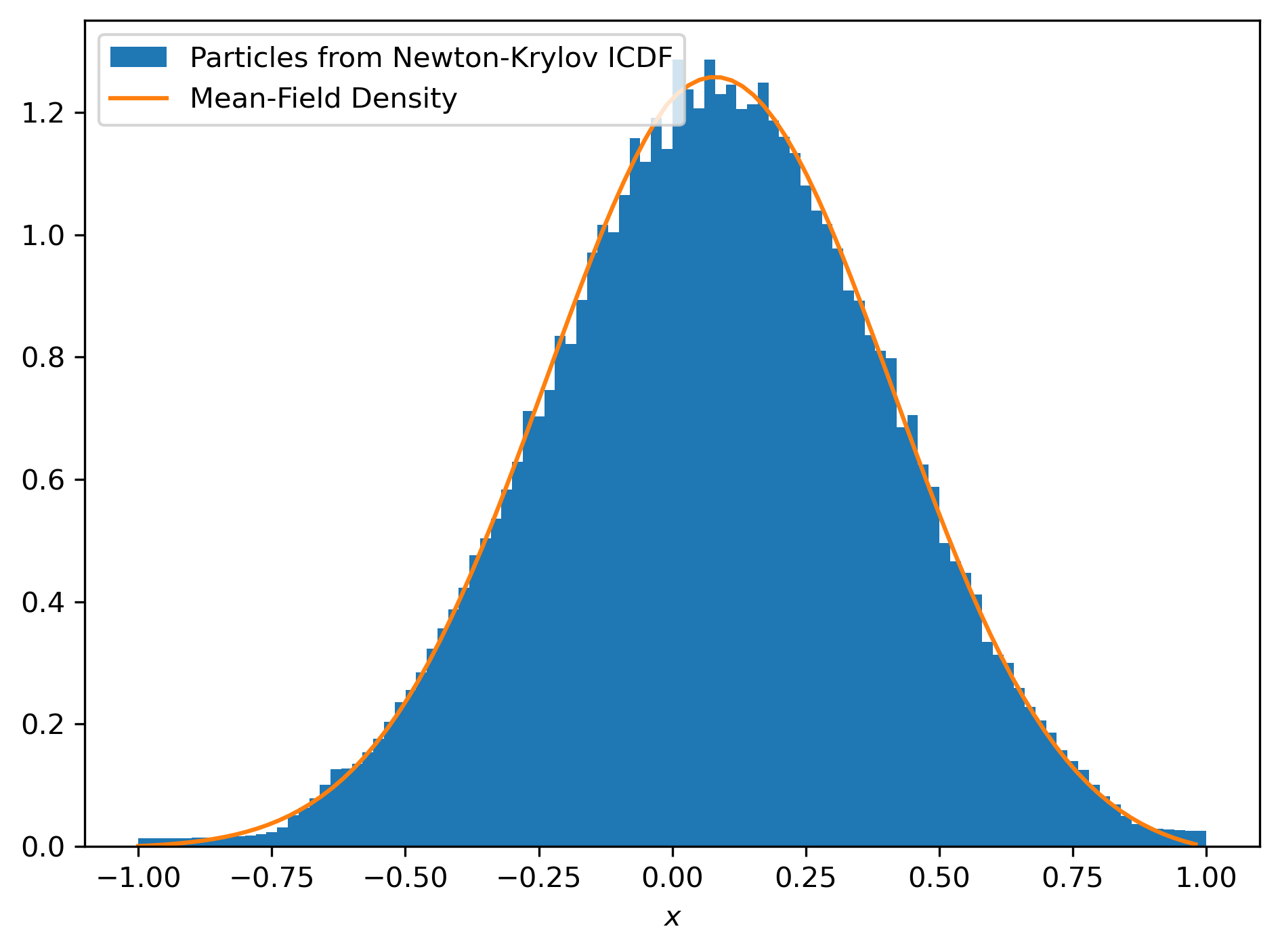}
    \end{subfigure}
    \caption{(Left) Initial ICDF (green), true steady-state (orange) and steady-state ICDF computed by Newton--Krylov (dashed blue). (Right) Particles sampled from the Newton--Krylov ICDF (blue) and steady-state density corresponding to the mean-field PDE.}
    \label{fig:economics_icdf}
\end{figure}

Figure~\ref{fig:economics_icdf} illustrates the results obtained with Newton–Krylov on this ICDF timestepper. The left panel shows the initial ICDF, the invariant ICDF obtained from long-time microscopic simulation, and the steady-state ICDF recovered by Newton–Krylov. The close agreement between the two confirms that the algorithm correctly identifies the stable steady-state distributions of the system. The right panel displays the histogram of $N = 10^5$ particles sampled from the Newton–Krylov steady-state CDF, which closely matches the invariant distribution of the mean-field PDE~\eqref{eq:meanfield_agents}. 
\begin{figure}[h]
    \centering
    \includegraphics[width=0.6\textwidth]{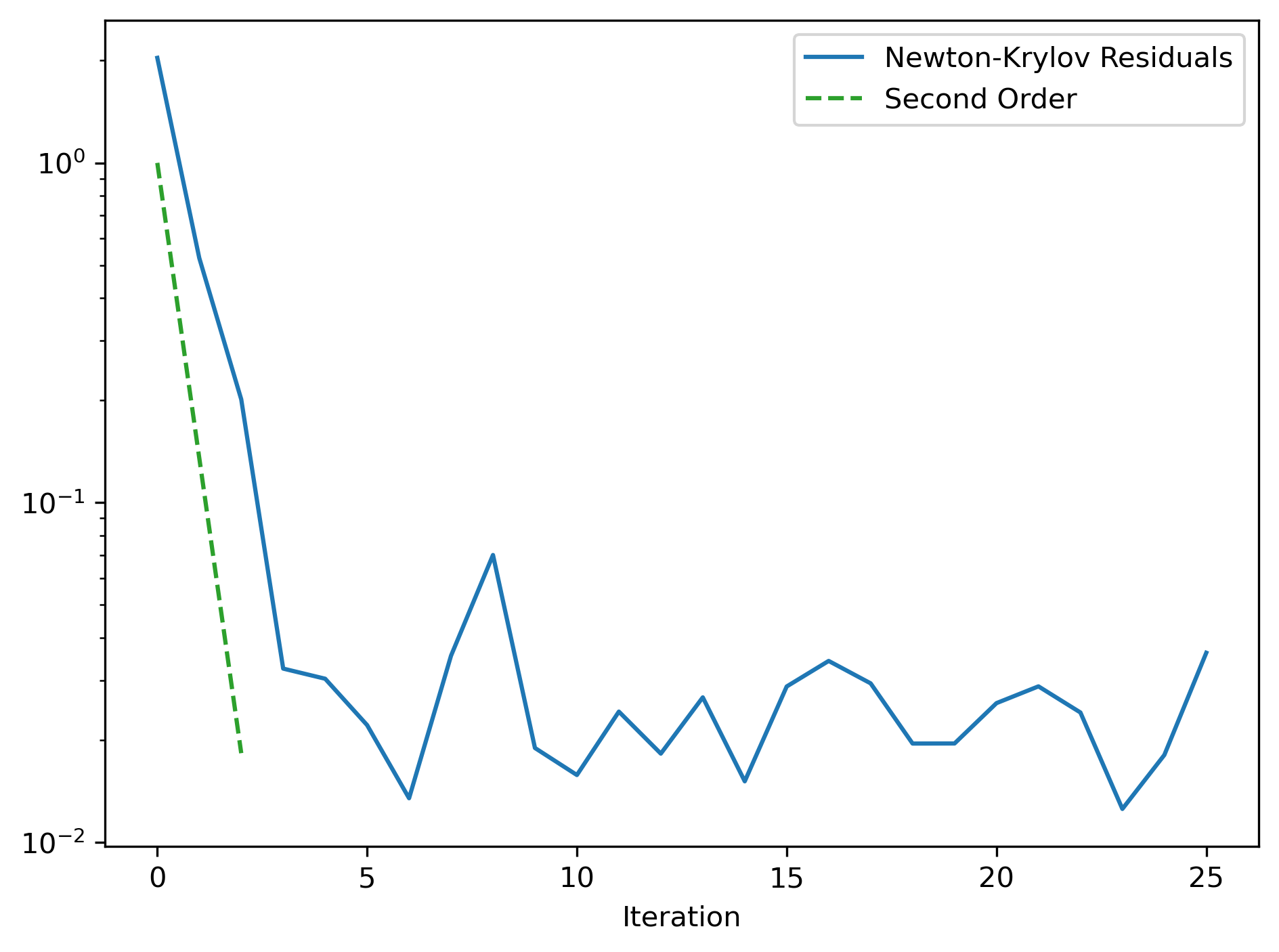}
    \caption{Newton--Krylov ICDF residual $\Psi\left((F^{-1})^{(k)}\right)$ per iteration for the economics agents model (blue); Second-order convergence rate (green). }
    \label{fig:economics_icdf_losses}
\end{figure}
For completeness we also show the Newton--Krylov error as a function of the iteration number in Figure~\ref{fig:economics_icdf_losses}. We see that the error first decreases quadratically as is typical for Newton-like schemes, before settling down in the noise regime.

\section{Extending Smooth Representations to Multiple Dimensions} \label{sec:higher_dimensions}
In one dimension, the ICDF approach allows us to handle the issue presented by noisy particles and still develop an appropriate Newton--Krylov method.
Extending this idea beyond one dimension requires additional care. There is not an analog of the ICDF that appropriately captures the optimal transport maps. In particular, an inverse cumulative distribution function cannot be defined in a straightforward manner for $d \ge 2$. The reason is that the CDF is not injective: for a given probability level $p \in [0,1]$, the level set
\begin{equation}
    \left\{(x,y) \in \mathbb{R}^2 \mid F_{X,Y}(x,y)=p\right\}
\end{equation}
typically forms a one-dimensional manifold (a curve) rather than a single point. Hence, the mapping $\left(F_{X,Y}\right)^{-1}(p)$ is not a well-defined function from $[0,1] \to \mathbb{R}^2$. Consequently, the notion of “evaluating the ICDF” becomes ambiguous in higher dimensions. It follows that the one-dimensional ICDF-based timestepper illustrated in Figure \ref{fig:icdftoicdf} cannot be extended to multivariate distributions in a direct manner. Even if a generalized notion of inverse CDF were introduced (e.g., by parameterizing level sets), its numerical evaluation would be computationally inefficient, since sampling points uniformly on such two-dimensional level sets is considerably more expensive than evaluating the scalar inverse $F^{-1}(p)$ in one dimension.

One must instead construct alternative smooth representations that retain the essential link to Optimal Transport theory. An option is to work with the two-dimensional cumulative distribution function (CDF), which generalizes the one-dimensional case and retains links to optimal transport. An alternative is the sliced Wasserstein distance. By projecting the distribution onto many one-dimensional directions, each projection yields an ICDF that is well defined, and these are aggregated to approximate the full Wasserstein geometry. Therefore, both the CDF and the sliced Wasserstein framework act as higher-dimensional analogues of the one-dimensional ICDF, providing the smoothness needed to construct well-behaved timesteppers and to restore the accelerated convergence of Newton–Krylov methods in the multidimensional setting.

It is worth emphasizing, though, that the reason we need an alternative approach is purely for computational feasibility. If we had an oracle that could instantaneously compute the optimal transport map between any two distributions of arbitrary size, our original approach would still work in multiple dimensions.
Indeed, if we were able to compute the associated vector field in the case of continuous distributions, then there would not be any noise, and we could run the Newton--Krylov method exactly as expected.
However, due to the noisy nature of the timestepper, our Newton--Krylov approach needs more particles in the simulation than would be reasonable to pass to an explicit OT solver.

This computational issue only appears in the setting of multiple ambient dimensions precisely because optimal transport maps are extremely cheap to compute in one dimension.
For multiple dimensions, computing an optimal transport map requires solving a linear program which is no longer practical for the number of particles we would need in order for the desired effects to become observable.
As we have discussed, the signal-to-noise ratio in the computation of the Jacobians is extremely poor for small numbers of particles, and in order to make the ratio high enough for our Newton--Krylov method to work, we need to make the number of particles so high that solving the linear program becomes computationally infeasible.
Additionally, the statistical rate of convergence of discrete optimal transport maps depends poorly on the dimension (typically \(n^{-1/d}\), where \(n\) is the number of points and \(d\) is the ambient dimension), meaning the number of particles needed also scales with the dimension.
This creates a bad tradeoff with computational complexity that cannot be resolved without unreasonable computational power.
Thus, the reason we need additional alternative approaches in multiple dimensions is the practical computational ability, not some fundamental flaw with the approach of running Newton--Krylov on the OT velocity fields.

We discuss two such alternatives in detail here: the multidimensional cumulative distribution function and the sliced-Wasserstein distance. We discuss the former idea in section~\ref{subsec:cdf2d} and the latter in section~\ref{subsec:sw2}.

\subsection{The CDF-to-CDF Timestepper} \label{subsec:cdf2d}
A natural choice is to work directly with the two-dimensional cumulative distribution function $F_{X,Y}(x,y)$, which provides a continuous and differentiable description of the underlying particle ensemble. The notion of a cumulative distribution function (CDF) extends naturally to multiple dimensions. For clarity of exposition, we focus on the bivariate case, although all subsequent algorithms and results generalize to arbitrary number of dimensions. Let $(X,Y)$ be a random vector with joint distribution. Its cumulative distribution function is defined by
\begin{equation}
 F_{X,Y}(x,y) = P(X\leq x, Y\leq y) = \int_{-\infty}^x \int_{-\infty}^y \mu(u,v) dudv.
\end{equation}

Sampling from a two-dimensional cumulative distribution function can be performed through its marginal and conditional components. Following the approach described by~\cite{zou2005equation,zou2006equation}, the joint CDF $F_{X,Y}(x,y)$ can be decomposed into the marginal CDF $F_X(x)$ of one variable and the conditional CDF $F_{Y|x}(y|x)$ of the other. This decomposition allows sampling to proceed through a sequence of one-dimensional operations: one first draws a sample $x$ from the marginal cumulative distribution $F_X(x)$, and subsequently samples $y$ from the conditional cumulative distribution $F_{Y|X}(y)$. In this way, multidimensional sampling reduces to iterated evaluations of smooth one-dimensional ICDFs, which are mathematically well-defined.

The two-dimensional sampling algorithm works as follows. First we construct the marginal CDF of the $x$-coordinates, given by
\begin{equation} \label{eq:marginal_cdf}
    F_X(x) = P(X \leq x) = \int_{-\infty}^x \int_{-\infty}^{\infty} \mu(u,v) dudv = F_{X,Y}(x,\infty).
\end{equation}
This one-dimensional CDF can be easily inverted to generate samples $x_1, x_2, \dots, x_n$ at equidistant percentiles $p_i = (i-0.5)/n$. Next, for each $x$-sample $x_i$ we construct the one-dimensional CDF of the conditional distribution $\mu(y|x_i)$. It can be shown~\cite{papoulis1965random,probability1995measure} that this conditional cumulative distribution function is given by the formula
\begin{equation} \label{eq:conditional_cdf}
 F_{Y|X_i}(y) = \frac{\partial_x F_{X,Y}(X_i,Y)}{\mu_X(X_i)}.
\end{equation}
Given any smooth representation of $F_{X,Y}(x,y)$ this partial derivative is readily available, and sampling the $y$-coordinates $X_i$ can also be achieved by inverting $F_{Y|X_i}(y)$ and evaluating it in equidistant percentiles $q_j = (j-0.5)/m$. The end product of this staged sampling algorithm are $N$ particles $(X_n, Y_n)_{n=1}^N$. After propagating the particles through the timestepper
\begin{equation}
    (\tilde{X}_n,\tilde{Y}_n) = \phi_h(X_n,Y_n),
\end{equation}
we reconstruct the updated two-dimensional CDF by computing, at each grid node $(x_i, y_j)$, the fraction of particles $(\tilde{X}_n, \tilde{Y}_n )_{n=1}^N$ located in the lower-left quadrant relative to that point:
\begin{equation}
    F_{X,Y}(x_i,y_j) = \#\left\{ n \ | \ \tilde{X}_n \leq x_i \ \text{and} \ \tilde{Y}_n \leq y_j \right\} / N
\end{equation}
Starting from the CDF $F_{X,Y}^t$ at time $t$, the three steps 
\begin{itemize}
\item[1.] Sampling $(X_n,Y_n)$ from marginal and conditional CDFs; 
\item[2.] Forward particle propagation using the particle timestepper $(\tilde{X}_n, \tilde{Y}_n) = \phi_h(X_n, Y_n)$;
\item[3.] Restriction to the 2D CDF by counting particles in the lower right quadrant of each grid point;
\end{itemize}
define a consistent CDF-to-CDF timestepper
\begin{equation}
    \Phi_h\left(F_{X,Y}^t\right) = F_{X,Y}^{t+h}.
\end{equation}
See Figure~\ref{fig:cdftocdf} for a schematic of this timestepper. The associated residual map reads
\begin{equation} \label{eq:2dcdf_psi}
\Psi\left(F_{X,Y}^t\right) = F_{X,Y}^t - \Phi_h(F_{X,Y}^t),
\end{equation}
which is identical to $0$ at every grid point $(x_i,y_j)$ in steady state. 

We note that constructing the two-dimensional CDF of a particle ensemble and re-sampling it through the marginal and conditional one-dimensional CDFs, as we outlined above, yields a monotone triangular map that is close to the OT solution under many practical conditions. In fact, it corresponds to the Knothe–Rosenblatt rearrangement~\cite{rosenblatt1952remarks,knothe1957contributions,santambrogio2015optimal,peyre2019computational}, a sequential, measure-preserving transformation that orders variables one at a time through their conditional distributions. While the Knothe–Rosenblatt map does not minimize the standard quadratic OT cost (and hence is not itself an optimal transport map in the usual sense), it can be expressed as a limit of optimal transport maps for different cost functions \cite{carlier2010knothe}.
In particular, it shares key structural properties and can be viewed as a computationally efficient approximation to the optimal transport map in high-dimensional sampling contexts. We do not require the exact optimal transport map in our Newton–Krylov calculations; any fast and reasonably accurate alternative that preserves the steady state is sufficient. The Knothe–Rosenblatt rearrangement offers a good alternative that is computationally efficient and preserves the steady state.

\begin{figure}[!ht]
  \centering
  \includegraphics[width=0.8\textwidth]{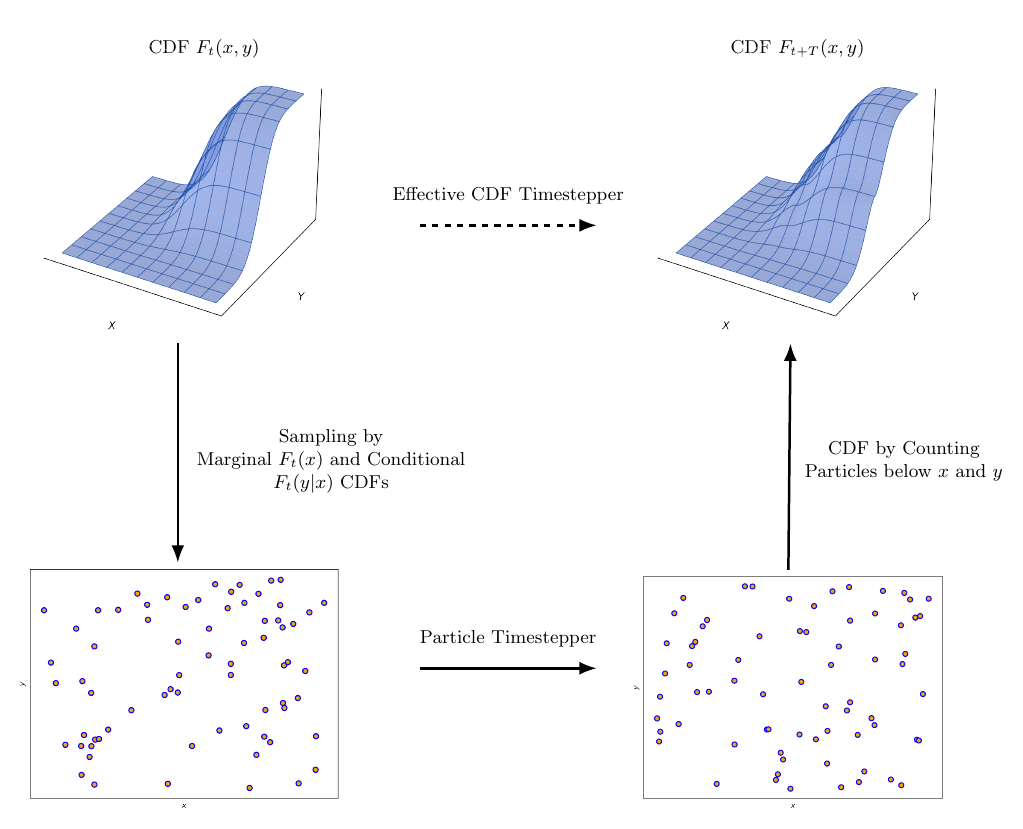}
  \caption{Schematic of the effective two-dimensional CDF–to-CDF timestepper.}
  \label{fig:cdftocdf}
\end{figure}

\paragraph{Numerical Example}
Consider the two-dimensional half-moon distribution~\eqref{eq:halfmoonpotential} as a representative example of our approach. We evaluate the empirical two-dimensional cumulative distribution function (CDF) on a uniform grid $(x_i, y_j)_{i,j}$ of $n_X = n_Y = 100$ points, equally spaced between $-4$ and $4$. The CDF-to-CDF timestepper is constructed using $N = 10^4$ particles, and sampling is performed by inverting the marginal and conditional one-dimensional cumulative distribution functions. To increase regularity, we interpolate the two-dimensional CDF with a piecewise-linear spline and solve for the corresponding percentiles. Specifically, for every percentile pair $(p_i, q_j)$, we determine the coordinates $(X_i, Y_j|X_i)$ such that
\begin{equation}
    F_X(X_i) = p_i, \ F_{Y|X_i}(Y_j) = q_j.
\end{equation}
This inversion can be implemented efficiently by vectorizing the evaluation of the marginal and conditional CDFs. Because the two-dimensional CDF is represented as a smooth spline, its partial derivatives—such as those appearing in~\eqref{eq:conditional_cdf}—are spline functions as well and need to be computed only once, independent of $p_i, q_j, X_i$ or $Y_j|X_i$.

We next apply the Newton--Krylov scheme to find the steady-state distribution of the CDF-to-CDF timestepper. The timestepper is based on an Euler-Maruyama discretization of the dynamics~\eqref{eq:langevin_halfmoon} with time step $10^{-3}$. We integrate this dynamics up to time $h = 1$ second. The initial condition for the Newton--Krylov method is the standard bivariate Gaussian distribution (see the left of Figure~\ref{fig:nk_2dcdf}). 
\begin{figure}[h]
    \centering
    \begin{subfigure}[b]{0.49\textwidth}
        \centering
        \includegraphics[width=0.852\textwidth]{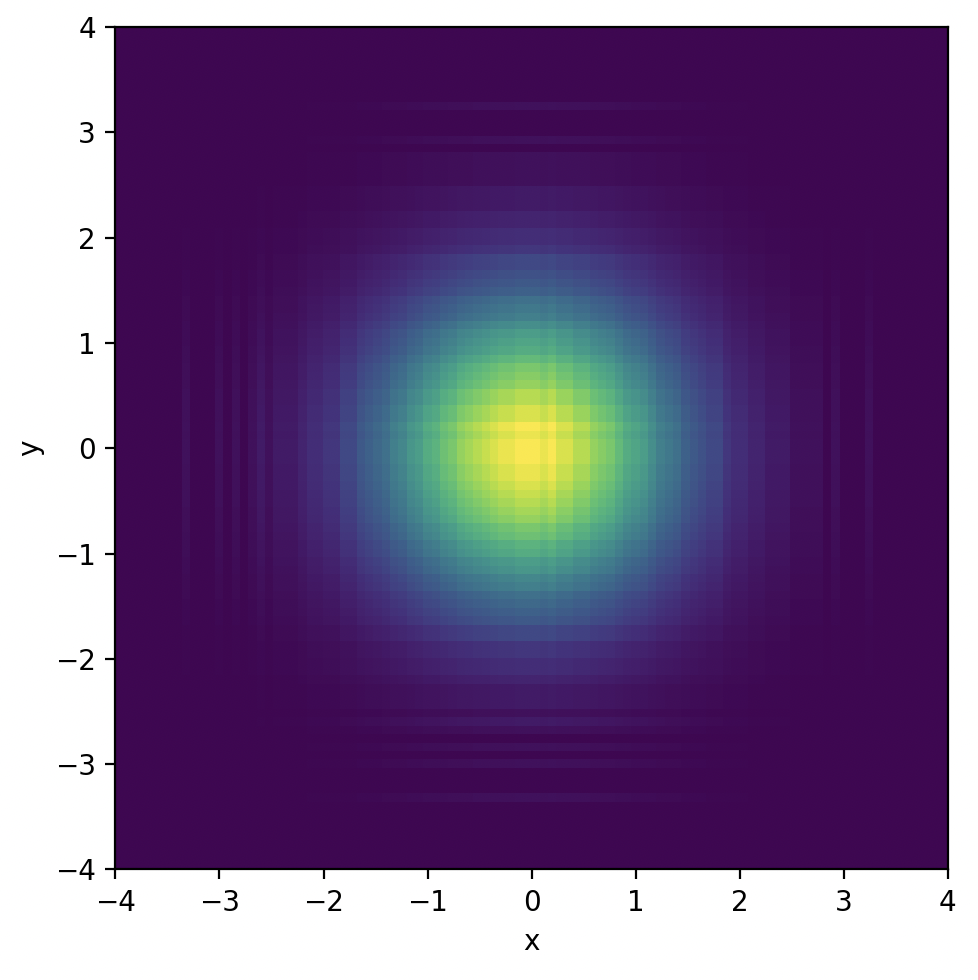}
    \end{subfigure}%
    \begin{subfigure}[b]{0.49\textwidth}
        \centering
        \includegraphics[width=\textwidth]{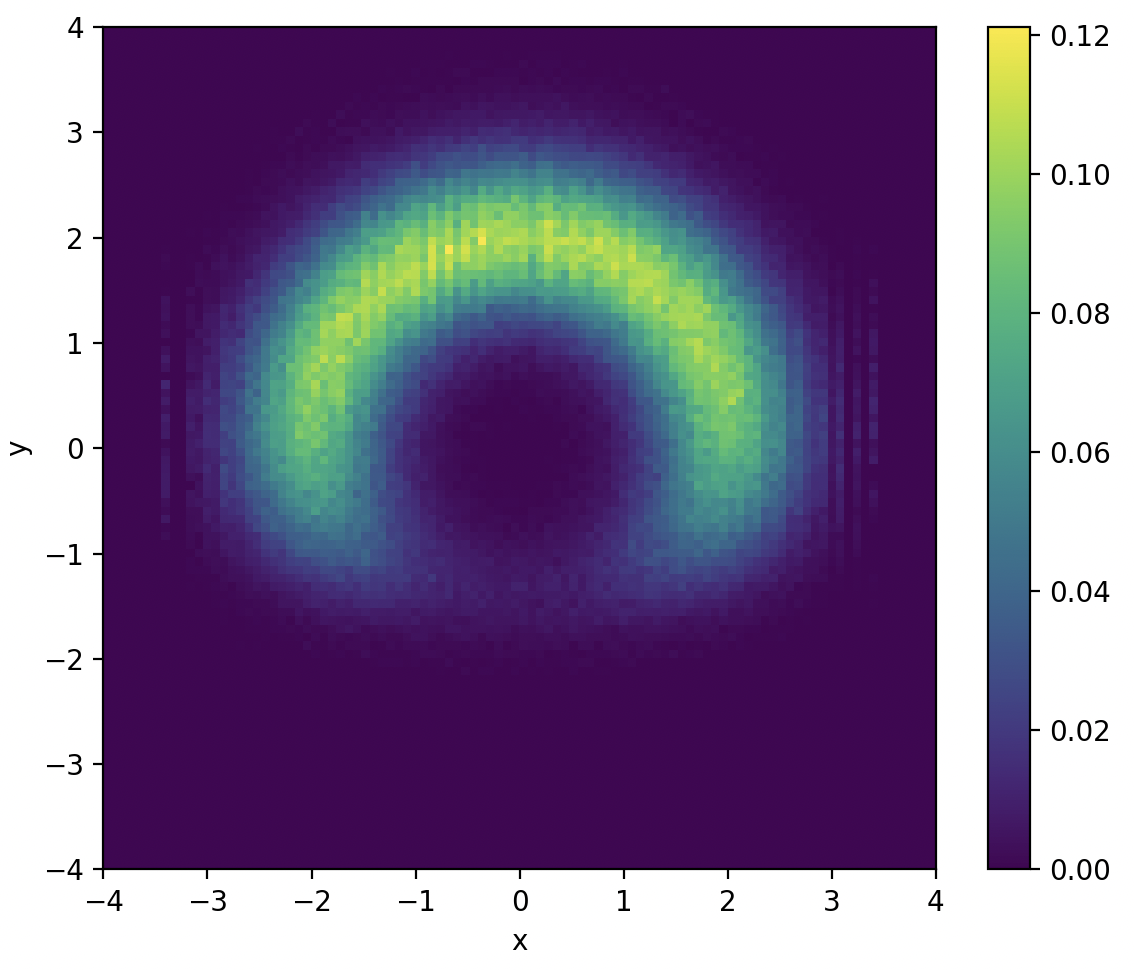}
    \end{subfigure}
    \caption{Histogram heatmaps of particle density: (left) initial Gaussian condition; (right) distribution after Newton–Krylov optimization using the 2D CDF smooth representation.}
    \label{fig:nk_2dcdf}
\end{figure}
The optimized distribution obtained by Newton--Krylov is displayed on the right of Figure~\ref{fig:nk_2dcdf}. One can see that Newton--Krylov can recover the true steady-state distribution even though the initial distribution is quite far from equilibrium. Figure~\ref{fig:nk_2dcdf_loss} shows how the `loss` $\norm{\Psi\left(F_{X,Y}\right)}$ decreases steadily per nonlinear iteration and settles down to the noise level, a clear signal of convergence.

\begin{figure}[h]
    \centering
    \includegraphics[width=0.5\textwidth]{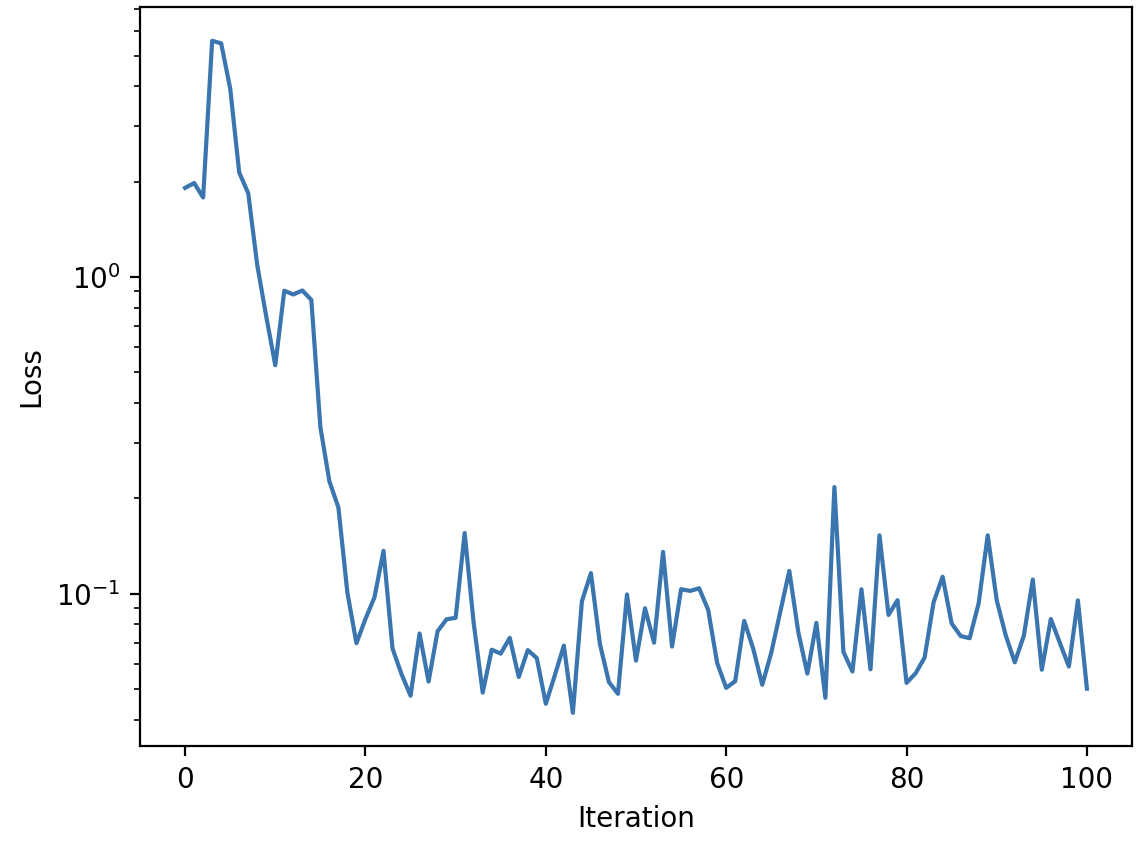}
    \caption{Newton--Krylov loss $\norm{\Psi\left(F^{(k)}_{X,Y}\right)}$ per iteration $k$. The loss decreases up until a noise level induced by the finite number of particles.}
    \label{fig:nk_2dcdf_loss}
\end{figure}

\subsection{The Sliced Wasserstein Timestepper} \label{subsec:sw2}
For completeness, we also consider an alternative smooth representation of multidimensional particle systems based on the sliced Wasserstein distance (SWD). The key idea is to project probability distributions in $\mathbb{R}^d$ onto one-dimensional subspaces defined by unit vectors $\theta \in \mathbb{S}^{d-1}$ at percentiles of the angular CDF, and to compute the Wasserstein distance between the resulting one-dimensional projected measures. Formally, for two distributions $\mu$ and $\nu$,
\begin{equation}
  SW_d^2(\mu,\nu) = \int_{\mathbb{S}^{d-1}}
  W_{d-1}^2\left(P_\theta \# \mu, P_\theta \# \nu\right)\,\mathrm{d}\theta,
\end{equation}
where $P_\theta(x)=\langle x,\theta\rangle$ is the projection onto the line spanned by~$\theta$ and $P_\theta \# \mu$ denotes the pushforward of $\mu$ by $P_\theta$.
In two dimensions, we approximate the integral with a finite set of angles $\{\theta_i\}_{i=1}^{N_\theta}$:
\begin{equation}
  SW_2^2(\mu,\nu)
  \;\approx\;
  \frac{1}{N_\theta}\sum_{i=1}^{N_\theta}
  W_2^2\!\big(P_{\theta_i}\#\mu,\; P_{\theta_i}\#\nu\big)
  \;=\;
  \frac{1}{N_\theta}\sum_{i=1}^{N_\theta}
  \int \!\big|F_{P_{\theta_i}\mu}^{-1}(r)-F_{P_{\theta_i}\nu}^{-1}(r)\big|^2\,\mathrm{d}r,
\end{equation}
where $F_{P_{\theta_i}\mu}$ is the one-dimensional CDF of the projected measure $P_{\theta_i}\#\mu$.
We note that the sliced Wasserstein distance lower bounds the ordinary Wasserstein distance, and it also gives an upper bound up to a constant that depends on the ambient dimension \cite{bonneel2015sliced}.
Thus, the sliced Wasserstein distance is indeed a reasonable approximation, preservers the steady state, and we can still use it effectively build a similar timestepper.

The sliced Wasserstein idea provides an alternative to build smooth timesteppers from an underlying particle timestepper. We retain a rectangular grid representation of the two-dimensional CDF, but sampling proceeds through directional projections. To achieve a consistent sampling, we need to first sample directions $\theta_i$ from the marginal angular CDF $F_\Theta$ and, for each $\theta_i$, generate radii $r_j|\theta_i$ from the conditional CDF $F_{R|\theta_i}(r)$. However, unlike the Cartesian marginal and conditional CDFs, there are no explicit expressions analogous to equations~\eqref{eq:marginal_cdf} and~\eqref{eq:conditional_cdf} for the marginal angular and conditional radial CDFs. We need to first explicitly construct the two-dimensional density
\begin{equation}
    \mu(x,y) = \nabla F_{X,Y}(x,y)
\end{equation}
which can be obtained efficiently from a spline representation of $F_{X,Y}$. Then the marginal angular CDF is given by
\begin{equation} \label{eq:marginal_angular}
    F_{\Theta}(\theta)
    = \int_{0}^{\theta} \int_{0}^{\infty}
        \mu \left(r \cos\phi,\, r \sin\phi\right)\,
        r\, dr\, d\phi.
\end{equation}
and the conditional radial CDF for any angle $\theta$ reads
\begin{equation} \label{eq:conditional_radial}
    F_{R|\theta}(r) = \int_{0}^{r} \mu(s \cos\theta, s\sin\theta)\ s\ ds.
\end{equation}
Sampling the 2D CDF consistently in a sliced Wasserstein-inspired way can then be achieved through
\begin{enumerate}
  \item Generating a set of angles $\{\theta_i\}_{i=1}^{N_\Theta}\subset[0,2\pi)$ by inverting the marginal angular CDF~\eqref{eq:marginal_angular} in fixed percentiles $p_i = (i - 0.5) / N_\Theta$;
  \item For each $\theta_i$, constructing the radial conditional CDF $F_{R|\theta_i}(r)$ using the projection~\eqref{eq:conditional_radial};
  \item Inverting each radial conditional CDF in fixed percentiles $\{r_j\}_{j=1}^{N_r}\subset(0,1)$ through a bisection-like scheme
        \[
          \rho_{ij} \;=\; F_{R | \theta_i}^{-1}(r_j).
        \];
  \item Mapping these 1D samples back to $\mathbb{R}^2$ via the inverse projection
        \[
          X_n=\rho_{ij}\cos\theta_i, \qquad Y_n=\rho_{ij}\sin\theta_i,
        \]
        and collecting all $(X_n,Y_n)$ across $i,j$.
\end{enumerate}
After sampling we proceed as with the CDF-to-CDF timestepper by propagating the samples through the particle timestepper to obtain new particles $(\tilde{X}_n,\tilde{Y}_n)$ and restricting back to the CDF by counting the number of $(\tilde{X}_n,\tilde{Y}_n)$ to the lower left of each CDF grid point $(x_i,y_j)$. Together, this sliced representation captures much of the transport geometry while remaining computationally light, providing a scalable surrogate for the full multidimensional Wasserstein map within our Newton--Krylov framework. The complete sliced Wasserstein-to-sliced Wasserstein timestepper is shown schematically in Figure~\ref{fig:swtosw}. 

\begin{figure}[!ht]
  \centering
  \includegraphics[width=0.8\textwidth]{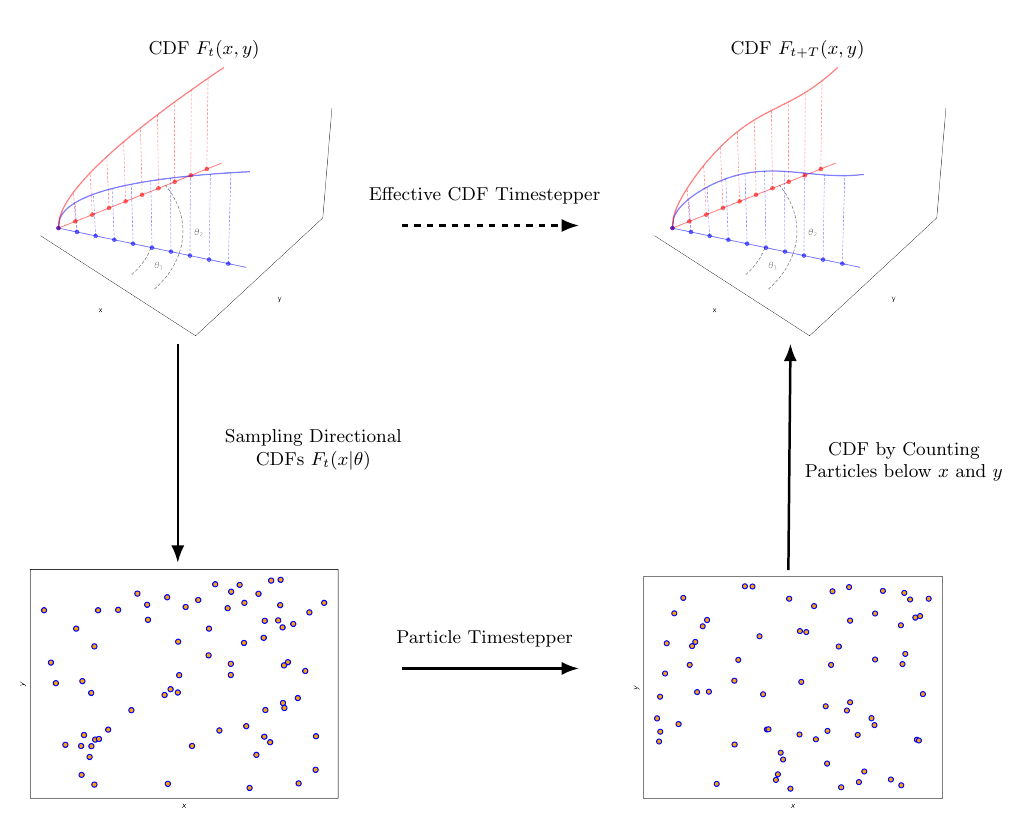}
  \caption{Schematic of the effective sliced Wasserstein to sliced Wasserstein timestepper.}
  \label{fig:swtosw}
\end{figure}

Finally we also show how the Newton--Krylov method on the sliced-Wasserstein representation of particles converges to the steady-state distribution of the half-moon potential. The initial condition and timestepper parameters are the same as in section~\ref{subsec:cdf2d}, and sampling is done by first generating $N_\Theta=100$ angular samples and then $N_R=100$ radial samples for each angle. The Newton--Krylov loss per iteration is shown in the left subfigure of Figure~\ref{fig:nk_sw2} and the resulting steady-state distribution is shown on the right.

\begin{figure}[h]
    \centering
    \begin{subfigure}[b]{0.5\textwidth}
        \centering
        \includegraphics[width=\textwidth]{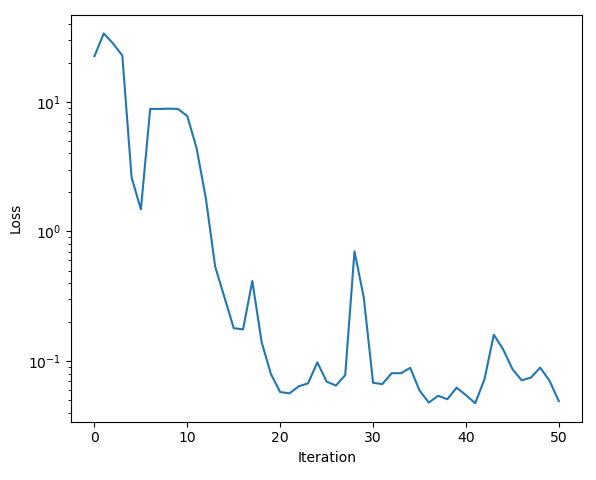}
    \end{subfigure}%
    \begin{subfigure}[b]{0.5\textwidth}
        \centering
        \includegraphics[width=\textwidth]{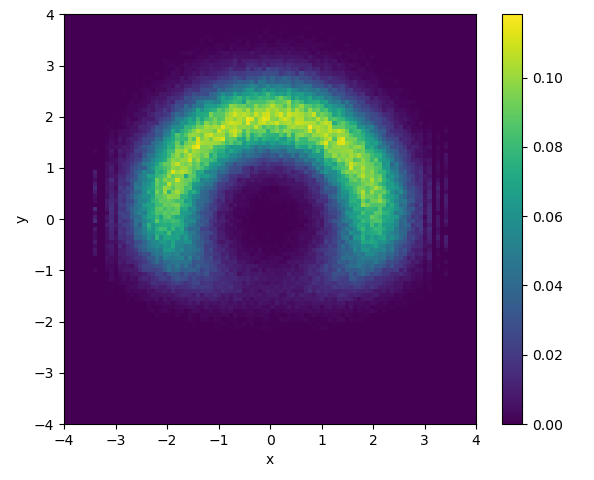}
    \end{subfigure}
    \caption{(left) Loss $\norm{\Psi(F_{X,Y}^{(k)}}$ per iteration; (right) Histogram heatmap of resulting steady-state particle density of the Newton--Krylov method applied to the Sliced Wasserstein representation.}
    \label{fig:nk_sw2}
\end{figure}

As shown, there is no significant difference in either the convergence rate or the resulting steady state between the sliced Wasserstein and the direct CDF representations. This outcome is expected, since both formulations are mathematically equivalent. The key insight is that employing smooth representations enables the use of higher-order optimization schemes, while the specific choice of representation is comparatively unimportant. The decisive factor is the computational efficiency of sampling.

\section{Discussion and Outlook} \label{sec:conclusion}
We have presented a unified, matrix-free framework for computing steady-state distributions of (stochastic) particle timesteppers. The key idea is to reformulate a steady state in the language of optimal transport. On the deterministic side, we revisited the residual formulation $\psi(u) = u - \phi_h(u)$ and showed how the Newton–Krylov method efficiently recovers steady states of the Fokker-Planck equation. We also derived a practical criterion for selecting the integration horizon $h$ from the spectral gap. On the stochastic side, we introduced a first-order Adam-Wasserstein method for calculating steady states directly on the particle level. We also clarified why a naive particle-level extension fails to higher-order optimizers such as Newton--Krylov. Stochastic noise breaks the one-to-one correspondence between ensembles, and Jacobian–vector products acquire variance scaling as $1/\varepsilon$. Our analysis makes this bias–variance trade-off explicit and shows that stable convergence can be achieved when the finite-difference step size is chosen within a noise-dependent range.

To address this limitation, we introduced smooth distributional timesteppers—first in one dimension through the ICDF-to-ICDF map, and then in multiple dimensions through CDF-to-CDF and sliced Wasserstein formulations. These representations aggregate microscopic variability into smooth macroscopic objects on which Newton–Krylov regains its fast, second-order convergence. Numerical results confirm that these smooth timesteppers yield comparable steady states with markedly reduced stochastic fluctuations, enabling accurate steady-state computations even in noisy particle systems. The central message is that smoothness in representation, rather than in the underlying dynamics, is the key to robust, matrix-free solvers for stochastic steady states.

One of the main questions we want to address during further research will be to reduce the stochastic error in the finite-differences approximation of the Jacobian. Variance reduction at the particle level through correlated samples could further stabilize Jacobian-vector products and allow smaller finite-difference steps. In the case of central finite differences, antithetic variates might be a natural variance reduction technique since paired perturbations with opposite randomness can cancel leading-order stochastic fluctuations while preserving the deterministic directional derivative signal.

Beyond computing single steady states, an important next step is to apply Newton--Krylov to compute steady-state branches of parameter-dependent particle systems. Such numerical continuation algorithms require consistent residual evaluations between successive Newton–Krylov steps. Therefore, improving variance reduction at the particle level—through correlated sampling, common-random-number strategies, or smoother estimators will be crucial to enable robust and efficient continuation of stochastic steady states, including the reliable detection of folds and bifurcations in distribution space.

Finally, scaling these approaches to large particle ensembles and to systems with potentially thousands of dimensions remains a major challenge. A key open question is how to identify the most effective sampling strategy for multidimensional CDFs (or their smooth equivalents) constructed via percentile evaluations. What constitutes “best” in this context is not yet defined, but it will likely involve a balance between proximity to the true optimal transport map (much like the Knothe–Rosenblatt rearrangement approximates it) and the computational efficiency of the resulting sampling algorithm. Establishing this balance will be essential for extending smooth timestepper frameworks to high-dimensional stochastic systems like molecular dynamics~\cite{39171} or real economic systems.

\section*{Acknowledgments}
H.V. and I.G.K. acknowledge partial support by the Department of Energy (DOE) under Grant No. DE-SC0024162; I.G.K. also acknowledges partial support by the National Science Foundation under Grants No. CPS2223987 and FDT2436738.  A.C. acknowledges partial support by the National Science Foundation under Grant  CISE CCF 2403452.

\appendix
\section{Derivation of the Relation between the Eigenvalues of $Df$ and $D\phi_h$} \label{app:eigenvalues}
Let 
	\begin{equation} \label{eq:pde}
		\partial_t u_t = f\left(u_t\right)
	\end{equation}
	be a semi-discretized PDE. We are interested in steady-state points $u^*$ such that $f(u^*)=0$. However, in most situations the right-hand side $f$ of the PDE is not available, only a timestepper is. Call $\phi_h(u)$ the flow map of a timestepper with initial condition $u$ over a time interval of size $h$. Steady-states of the PDE~\eqref{eq:pde}, i.e., zeros of $f$, are also zeros of
	\begin{equation*}
		\psi(u) = u - \phi_h(u).
	\end{equation*}
	Additionally, stable steady states of $f$ are stable steady states of $\psi$ and vise versa. In general, the following theorem holds.
	\begin{theorem}
		Let $\lambda_i$ and $\mu_i$ be the respective eigenvalues of $D f(u^*)$ and $D \phi_h(u^*)$. Then $\mu_i = \exp\left(\lambda_i h\right)$. 
	\end{theorem}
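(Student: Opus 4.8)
The plan is to express $D\phi_h(u^*)$ as a matrix exponential via the first-variation (variational) equation of the flow, and then to read off the eigenvalues by the spectral mapping theorem. First I would recall that, under the standing assumption that $f$ is $C^1$ — which we need anyway for $Df(u^*)$ to be meaningful — the flow map $u\mapsto \phi_t(u)$ is continuously differentiable in its initial condition, and the matrix $M(t) := D\phi_t(u)$ satisfies the linear ODE obtained by differentiating $\partial_t \phi_t(u) = f(\phi_t(u))$ with respect to $u$:
\[
    \partial_t M(t) = Df\big(\phi_t(u)\big)\, M(t), \qquad M(0) = I.
\]

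Next I would specialize to the steady state $u = u^*$. Since $f(u^*) = 0$, the trajectory is stationary, $\phi_t(u^*) = u^*$ for all $t\ge 0$, so the coefficient matrix $Df(\phi_t(u^*)) = Df(u^*) =: J$ is constant in $t$. The variational equation then reduces to the constant-coefficient linear matrix ODE $\dot M = JM$ with $M(0)=I$, whose unique solution is $M(t) = \exp(tJ)$. Evaluating at $t = h$ gives $D\phi_h(u^*) = \exp(hJ) = \exp\big(h\, Df(u^*)\big)$, which is precisely the relation $D\phi_h(u^*)=\exp(hJ)$ already used heuristically in the spectral-gap discussion of the main text.

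Finally, the eigenvalue statement follows from the spectral mapping theorem for the matrix exponential: if $\lambda_i$ is an eigenvalue of $J$, then $\exp(h\lambda_i)$ is an eigenvalue of $\exp(hJ)$, with algebraic multiplicities preserved. Concretely, I would put $J$ into Jordan form $J = P\big(\bigoplus_k J_k\big)P^{-1}$; the exponential acts block-wise, $\exp(hJ) = P\big(\bigoplus_k \exp(hJ_k)\big)P^{-1}$, and each $\exp(hJ_k)$ is upper triangular with the single value $\exp(h\lambda_k)$ along its diagonal. Hence the eigenvalues of $\exp(hJ)$ are exactly $\{\exp(h\lambda_i)\}$, i.e.\ $\mu_i = \exp(\lambda_i h)$ as claimed.

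The main obstacle — really the only non-routine point — is the justification of smooth dependence of the flow on initial conditions and the resulting variational equation; this is classical ODE theory but needs $f\in C^1$ and, for a genuinely global statement, either the finite horizon $h$ together with existence of the flow up to time $h$, or an a priori bound. In the semi-discretized PDE setting of interest this holds on the relevant time interval, so I would simply state the regularity hypothesis and cite a standard reference rather than belabor it. A secondary subtlety worth a one-line remark is that for the \emph{discrete} timestepper actually implemented (e.g.\ explicit Euler), $D\phi_h(u^*)$ equals $\exp(hJ)$ only up to the order of the integrator; the exact identity is a property of the continuous-time flow.
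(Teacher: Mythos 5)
Your proof takes essentially the same route as the paper's: derive the first-variation equation $\dot M(t) = Df(\phi_t(u))\,M(t)$, $M(0)=I$ (the paper does this via the integral representation $\phi_h(u) = u + \int_0^h f(u_s)\,ds$ and differentiation under the integral, you by differentiating the ODE directly — same content), specialize to the stationary trajectory $\phi_t(u^*)\equiv u^*$ so that the coefficient is the constant matrix $J = Df(u^*)$, solve as $M(t) = \exp(tJ)$, and apply the spectral mapping theorem. Your version is in fact slightly more careful on two minor points: you freeze $J$ at the steady state \emph{before} writing down the exponential (the paper's intermediate claim that the general time-dependent system $\dot A = J(t)A$ has solution $A(t) = \exp\bigl(\int_0^t J(s)\,ds\bigr)$ is only valid when the $J(s)$ commute, though this is harmless since only the constant-coefficient case is used), and you make the eigenvalue transfer explicit via Jordan form rather than asserting it. Your concluding remarks about $C^1$ regularity and the discrete-integrator caveat are reasonable additions that the paper leaves implicit.
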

	\begin{proof}
		Starting from the integral representation of ODE~\eqref{eq:pde},
		\begin{equation*}
			\phi_h(u) = u + \int_{0}^{h} \partial_s u_s ds = \int_{0}^{h} f\left(u_s\right) ds.
		\end{equation*}
		Then, taking the gradient with respect to the initial condition $u$ ($D=D_u$), we get
		\begin{equation} \label{eq:integral_nabla_phi}
			D \phi_h(u) = I + D_u \int_{0}^{h} f\left(u_s\right) ds = I + \int_{0}^{h} D \left[f\left(u_s\right) \right]ds.
		\end{equation}
		Applying the chain rule $D \left[f\left(u_s\right) \right] = D f\left(u_s\right) D u_s$. However, $u_s$ is just $\phi_s(u)$. Plugging these results into equation~\eqref{eq:integral_nabla_phi}
		\begin{equation*}
			D \phi_h(u) = I + \int_{0}^{h} D f\left(u_s\right) D \phi_s(u) ds.
		\end{equation*}
		For brevity, call $J(s) = D f(u_s)$ and $A(t) = D \phi_t(u)$. We then obtain a compact integral equation
		\begin{equation*}
			A(t) = I + \int_{0}^{h} J(s) A(s) ds,
    	\end{equation*}
    	which is the integral representation of the solution to the matrix ODE
    	\begin{equation*} \label{eq:matrix_ode}
    		\partial_t A(t) = J(t) A(t)
    	\end{equation*}
	   with initial condition $A(0)=I$. This ODE has a unique solution
    	\begin{equation*}
    		A(t) = \exp\left(\int_{0}^t J(s) ds\right).
    	\end{equation*}
	   In steady-state, $J(s)$ is just a constant matrix $J=D f(u^*) $, and $A(t) = \exp\left(t J \right) $. The eigenvalues of $J$ are just $\lambda_i$.  We can conclude that the eigenvalues of $D \phi_h(u^*)=A(h)$ are $\exp\left(h \lambda_i \right)$, and therefore
	\begin{equation*}
		\mu_i = 1- \exp\left(h \lambda_i \right).
	\end{equation*}
\end{proof}

\section{Analytic Steady-State Distribution of the Chemotaxis Model} \label{app:chemotaxis}
The chemotaxis stochastic model~\eqref{eq:chemotaxis} can be seen as a special case of the overdamped Langevin dynamics
\begin{equation*}
    dX_t = -U'(X_t) dt + \sqrt{2 D} dW_t
\end{equation*}
with potential energy 
\begin{equation*}
U(x) = -\int_{-L}^x \chi(S(y)) S_y(y) dy = -\int_{S(-L)}^{S(x)} \chi(S)dS.
\end{equation*}
The invariant distribution of the overdamped Langevin dynamics is
\begin{equation*}
    \mu(x) = Z^{-1} \exp\left(-\frac{1}{D} U(x)\right) = Z^{-1} \exp\left(\frac{1}{D} \int_{-1}^{S(x)} \chi(S)dS\right)
\end{equation*}
It can be seen that the no-flux boundary conditions are automatically satisfied because $J(x) = 0$ everywhere in steady state.
\bibliographystyle{plain}
\bibliography{references}

\end{document}